 \newtheorem{theorem}{Theorem}
 \newtheorem{cor}{Corollary}
 \newtheorem{lemma}{Lemma}
  \newtheorem{remark}{Remark}
 \newtheorem{proposition}{Proposition} \theoremstyle{definition}
  \newtheorem*{definition-proposition}{Definition-Proposition} \theoremstyle{definition}
  \theoremstyle{definition}
 \theoremstyle{definition}
 \theoremstyle{definition}
  \newtheorem*{guiding example}{Guiding example}
 \numberwithin{equation}{section}
 \newcommand{\nc}{\newcommand}
\nc{\ssn}{\subsection{}} \nc{\sssn}{\subsubsection{}}
\numberwithin{equation}{section}
\nc{\oR}{\ol{R}}
\newcommand{\ol}{\ensuremath{\overline}}
\nc{\id}{\textrm{id}}
\newcommand{\ov}{\ensuremath{\overline}}
\newcommand{\oGf}{\ensuremath{\overline{\Gamma}_f }}
\newcommand{\N}{\mathcal{N}}
\newcommand{\hlambda}{\ensuremath{{\widehat{\lambda}}}}
\newcommand{\hmu}{\ensuremath{{\widehat{\mu}}}}
\newcommand{\D}{\Delta}
\newcommand{\Bb}{\mathbb{B}}
\renewcommand{\H}{\mathcal{H}}
\renewcommand{\P}{\mathcal{P}}
\newcommand{\M}{\operatorname{M}}
\newcommand{\Ac}{{\mathcal{A}}}
\newcommand{\hlambdam}{\operatorname{Hom}}
\newcommand{\Ann}{\operatorname{Ann}}
\newcommand{\Hom}{\operatorname{Hom}}
\newcommand{\F}{\ensuremath{{\mathcal{F}}}}
\newcommand{\p}{\ensuremath{\mathfrak{p}}}
\renewcommand{\l}{\ensuremath{\mathfrak{l}}}
\renewcommand{\mod}{\ensuremath{\operatorname{mod}}}
\newcommand{\C}{\ensuremath{\mathbb{C}}}
\newcommand{\isoto}{\ensuremath{\overset{\sim}{\longrightarrow}}}
\newcommand{\Ker}{\operatorname{Ker}}
\newcommand{\ot}{\operatorname{\otimes}}
\newcommand{\g}{\ensuremath{\mathfrak{g}}}
\newcommand{\h}{\ensuremath{\mathfrak{h}}}
\renewcommand{\b}{\ensuremath{\mathfrak{b}}}
\newcommand{\V}{\ensuremath{\mathbb{V}}}
\newcommand{\A}{\ensuremath{{\mathcal{H}}}}
\renewcommand{\O}{\ensuremath{\mathcal{O}}}
\renewcommand{\M}{\mathcal{M}}
\newcommand{\n}{\ensuremath{\mathfrak{n}}}
\begin{document}
\author{Juan Camilo Arias  and Erik Backelin}
\title{Projective and Whittaker functors on category $\O$.}
\address{Erik Backelin, Departamento de Matem\'{a}ticas, Universidad de los Andes,
Carrera 1 N. 18A - 10, Bogot\'a, COLOMBIA}
\email{erbackel@uniandes.edu.co}
\address{Juan Camilo Arias, Departamento de Matem\'{a}ticas, Universidad de los Andes, Carrera 1 N. 18A - 10, Bogot\'a, COLOMBIA. Tel: +571 3394999 ext. 3705 Fax: +571 3324427 }
\email{jc.arias147@uniandes.edu.co }
 \subjclass[2000]{Primary 17B10, 18G15, 17B20}

\keywords{}

\maketitle

\begin{abstract} We show that the Whittaker functor on a regular block of the BGG-category $\O$ of a semisimple complex Lie algebra can be obtained by composing a translation to the wall 
functor with Soergel and Mili\v{c}i\'{c}'s equivalence between the category of Whittaker modules and a singular block of $\O$. We show that the Whittaker functor is a quotient functor that commutes with 
all projective functors and endomorphisms between them.
\end{abstract}

        


\section{Introduction}
Let $\g$ be a complex semisimple Lie algebra, $\b$ a Borel subalgebra, $\h$ a Cartan subalgebra and $\n = [\b,\b]$. Let $\O$ be the Bernstein-Gelfand-Gelfand (BGG) category of representations of $\g$. 
Let $U = U(\g)$ be the universal enveloping algebra of $\g$
and $Z$ its center.
Let $f:U(\n) \to \mathbb{C}$ be an algebra homomorphism and $\N_f$ the corresponding category of Whittaker modules (finitely generated $U$-modules that are locally finite over $Z$ and locally annihilated by $\Ker f$).
For a $U$-module $V$ let $\Gamma_f(V)$ denote the submodule of vectors annihilated by some power of $\Ker f$. In \cite{Bac} the Whittaker functor
$$\oGf: \O \to \N_f, \ M \mapsto \Gamma_f(\ov{M})$$
was introduced. Here $\ov{M}$ denote the completion of $M$, i.e. the product of its weight spaces.
It is exact and maps Verma modules to standard Whittaker modules. In this paper we shall establish a few basic properties of this functor that we haven't found in the literature. 

\medskip

When $f$ is a regular character (i.e. $f(E_\alpha) \neq 0$ for all simple root vectors $E_\alpha$) Kostant \cite{K} showed that
$\N_f$ is equivalent to the category of finite dimensional $Z$-modules. The functor is $M \mapsto Wh_f(M) = \{m \in M | \Ker f \cdot m = 0\}$.

On the other hand, if we fix a dominant weight $\lambda \in \h^*$ and the corresponding block $\O_\hlambda \subset \O$
there is Soergel's functor $\V: \O_\hlambda \to Z-\mod$, $\V(M) = \Hom(P_{w_0 \cdot \lambda}, M)$, where $P_{w_0 \cdot \lambda}$ is the anti-dominant projective. It was shown in \cite{Bac} that if we restrict 
$\oGf$ to $\O_\hlambda$ and compose it with Kostant's equivalence $Wh_f$ the resulting functor is naturally equivalent to $\V$ (see Proposition \ref{aprop} for a short proof). 
Thus one may think of $\oGf$ as a "partial" Soergel functor, for a general character $f$. 

Let $\lambda$ be dominant, integral and regular and $\mu$ be dominant and integral and assume that $f$ is a character such that $W_\mu = W_f$. Then
using the machinery of Harish-Chandra bimodules Soergel and Mili\v{c}i\'{c}, \cite{MS}  established an equivalences between (an enhancement of) $\O_\hmu$ and the block $\N^f_\hlambda$.
Harish-Chandra bimodule theory provides an equivalence $\kappa: \O_\hlambda \isoto \O'_\lambda$. If we compose $\kappa$ with the translation to the wall functor $\Theta^\mu_\lambda :  \O'_\lambda \to \O'_\hmu$ and 
then compose the result with the equivalence $\sigma: \O_\hmu \isoto \N^f_\hlambda$ we obtain a functor $\tau = \sigma \Theta^\mu_\lambda \kappa: \O_\hlambda \to \N^f_\hlambda$.  We show in Theorem \ref{equivalentTheorem} that $\tau$ is 
naturally equivalent to $\oGf$. 

As a consequence we establish that $\oGf$ has a left and a right adjoint. (It has eluded us to directly prove the existence of these adjoints without using $\tau$.) We also prove
that $\oGf$ is a quotient functor, Corollary \ref{maincor}.

\medskip

\noindent
The idea behind the proofs of these facts is that the Whittaker functor $\oGf$ commutes not just with projective functors but actually with morphisms between projective functors in the following sense:
Let $\P$ and $\P'$ be projective endofunctors of the category of all $Z$-finite $U$-modules; in particular such functors act on $\O$ and on Whittaker modules. Let $\phi: \P \to \P'$ be a natural transformation. Then we are constructing equivalences $\Theta_\P: \P \circ \oGf \to  \oGf \circ \P $
such that $\oGf \circ \phi$ (composing a natural transformation with a functor) and $\phi \circ \oGf$ (precomposing a natural transformation with a functor) becomes equivalent after conjugation with $\Theta$:
$\Theta_{\P'} (\phi \circ \oGf) = (\oGf \circ \phi)\Theta_\P$. This is showed in Proposition \ref{commutingprop}. We also show that the functor $\tau$ commutes with projective functors and their morphisms in 
this sense. With this in hand, in order to establish Theorem \ref{equivalentTheorem} it is enough to observe that $\oGf(\D_\lambda) \cong \tau(\D_\lambda)$. 

\subsubsection{} We wonder if the commutativity between $\oGf$ and projective functors may be of interest in the study of the so called Rouqier complexes, see \cite{EW}, \cite{LW}. 
Rouquier complexes occur naturally when one constructs projective resolutions of Verma modules by iterated use of wall-crossing functors.
They are mostly interpreted as complexes of Soergel bimodules and they have been of fundamental importance in establishing Hodge theoretical properties of the category of 
Soergel bimodules. In their original form however Rouquier complexes are complexes of projective functors (and as such carry more information) and the results of this paper may perhaps help to find symmetries of them.
One may construct Rouqier complexes of Whittaker modules and it follows from the results here that these have good exactness properties, see Corollary \ref{rouquierwhittaker}. One may also use the composition of the Whittaker functor and its adjoint to provide endofunctors of $\O_\hlambda$ that commute with all projective endofunctors of $\O_\hlambda$. In section \ref{lastsection} we added some computations of the adjoints of $\oGf$ in order to facilitate this.

\subsubsection{Acknowledgements} We like to thank Paul Bressler for many useful conversations.

\section{Preliminaries. }
In this section we collect the known facts that we will need about category $\O$, Harish-Chandra bimodules, Whittaker modules and the Whittaker functor.

\subsection{Root data}
Let $\g \supset \b \supset \h$ be a complex semisimple Lie
algebra containing a Borel and a Cartan subalgebra and put $\n := [\b,\b]$. Let $W$ be the
Weyl group and $\Bb \subset \h^*$ the simple roots and  $\rho$ the half sum of the
positive roots. We consider the $dot$-action of $W$ on $\h^*$ defined by $w \cdot \lambda = w(\lambda+ \rho)-\rho$; we let $S(\h)^{W}$ be the invariants with respect to the dot-action.
Let $w_0 \in W$ be the longest element.

Let $U = U(\g)$ be the universal
enveloping algebra of $\g$ and $Z  \subset U$ its center.
For $\lambda \in \h^*$ let $J_{\lambda} \subset Z$ be the maximal ideal that annihilates a Verma module with highest weight $\lambda$.
Let $W_\lambda = \{w \in W| w \cdot \lambda = \lambda\}$. Let $W^\lambda$ be the set of longest representatives of the left cosets $W/W_\lambda$ and ${}^\lambda W$ the set of longest representatives of the right cosets
$W_\lambda \setminus W$.

\medskip

\noindent
Let $f: U(\n) \to \C$ be an algebra homomorphism. 
Let $\Bb_f = \{\alpha \in \Bb | f(E_\alpha) \neq 0\}$, where $E_\alpha \in \n$ is the Chevalley generator corresponding to $\alpha \in \Bb$, and let $W_f$ be the subgroup of $W$ generated by simple reflections $s_\alpha$, $\alpha \in \Bb_f$.

 We say that $f$ is \emph{non-degenerate} if $\Bb_f = \Bb$ and that $f$ is \emph{trivial} if $\Bb_f = \emptyset$.

\subsubsection{}
Let $\g$-$\mod$ denote the category of all (left) $U$-modules and let $\F$ be the category of finite dimensional $U$-modules. Let $\M$ denote the category of finitely generated $U$-modules such that the action of the subalgebra 
$Z \subset U$ is locally finite. For $\lambda \in \h^*$ define full subcategories of $\M$: 
$$\M_\lambda = \{V \in \M| J_\lambda V= 0\}, \ \M_\hlambda= \{V \in \M | \, \exists n>0: \, J^n_\lambda V = 0\}.
$$
We let $i_\lambda: \M_{\hlambda} \to \M$ denote the inclusion functor and define 
 $$pr_\lambda: \M \to \M_{\hlambda}, \ M \mapsto \{m \in M| \, \exists n >0 : \, J^n_\lambda \cdot m = 0\}.$$
 Then we have the block decomposition $\M = \oplus_{\lambda \in \h^*} \M_\hlambda$. For any full subcategory $\mathcal{C}$ of $\M$ we define full subcategories
 $\mathcal{C}_\lambda = \mathcal{C} \cap \M_\lambda$ and  $\mathcal{C}_\hlambda = \mathcal{C} \cap \M_\hlambda$.

\subsection{Category $\O$.}
See \cite{H}.
Let
$\O$ be the BGG-category of finitely generated $U$-modules
which are locally finite over $\n$ and semisimple over $\h$.  Any $M \in \O$ thus has the weight space decomposition $M = \oplus_{\nu \in \h^*} M^\nu$ and each $M^\nu$ is a finite dimensional vector space.
$\O$ is a full subcategory of $\M$.  Also, let $\O' \supset \O$ be the category of finitely generated $U$-modules
which are locally finite over $\b$ and locally finite over $Z$.  

For $\lambda \in \h^*$ we get full subcategories $\O_\lambda \subset O_\hlambda \subset \O$ and $\O'_\lambda \subset O'_\hlambda \subset \O'$ and block decompositions
$\O = \oplus_{\lambda \in \h^*} \O_\hlambda$ and $\O' = \oplus_{\lambda \in \h^*} \O'_\hlambda$.


For $M= \oplus_{\nu \in \h^*} M^\nu \in \O$ we let $M^* = \oplus_{\nu \in \h^*} \Hom_\C(M^\nu,\C)$ with the $\g$-module structure given by the Chevalley involution:
$(xf)(m) = f(x^{tr}m)$ for $x \in \g$, $f \in M^*$ and $m \in M$. Then $M^* \in \O$ and $M \isoto M^{**}$.

\medskip

For $\lambda \in \h^*$ denote by $\C_\lambda$ the corresponding one dimensional representation of $\b$ (by means of the projection $\b \to \h$).
Let $\D_\lambda = U(\g) \otimes_{U(\b)} \C_\lambda$ be the Verma module with highest weight $\lambda$, $\nabla_\lambda$ its dual, $L_\lambda$ its simple quotient and $P_\lambda$ a projective cover of $L_\lambda$ in $\O_\hlambda$ and $I_\lambda = P^*_\lambda$ an injective hull of $L_\lambda$.

\medskip

\noindent
Assume that a module $M \in \O$ has a filtration $0  = F_0 \subset F_1 \subset \ldots \subset F_n = M$ such that $F_i/F_{i-1} \cong \D_{\lambda_i}$ for  each $i$. Then we say that $M$ has a Verma flag
and we write $(M: \D_\lambda) = \#\{i | \, \lambda_i = \lambda\}$ for the corresponding Verma flag multiplicity. For any $V \in \O$ we write $[V:L]$ for the Jordan H\"older-multiplicity of $L $ in $V$, for $L$ a simple module.
Any projective module admits a Verma flag and BGG-reciprocity states $(P_\lambda: \D_\mu) = [\D_\mu: L_\lambda] $.

\subsection{Whittaker modules and functor.}
Let $\N$ be the category of finitely generated $U$-modules 
 which are locally finite over $U(\n)$ and locally finite over
$Z$. This is a finite length category. Let  $f: U(\n) \to
\C$ be an algebra homomorphism and denote by $\C_f$ the corresponding one dimensional representation of $U(\n)$.

Let $\N^f$ be the full subcategory of $\N$ whose objects are
locally annihilated by some power of $\Ker f$. Objects of $\N^f$ are called Whittaker modules.
(Thus, if $f$ is the trivial homomorphism then $\N^f =
\O'$.) Since $\n$ is nilpotent we have by \cite{Bourbaki}
$$\N = \oplus_{f:U(\n) \to \C} \,\N^f.$$
The categories
$\N$ and $\N^f$ also decomposes over the center $Z$:
$$
\N = \oplus_{\lambda \in \h^*} \N_{\hlambda}, \; \; \N^f =
\oplus_{\lambda \in \h^*} \N^f_{\widehat{\lambda}}.
$$

\begin{lemma}\label{Gabberlemma} a) Let $M$ be a $\g$-module and assume that each $m \in
M$ is annihilated by some power of $E_\alpha - f(E_\alpha)$ for
$\alpha \in \Bb$. Then $M$ is locally finite over $\n$. In
particular, if $M$ is finitely generated over $U$ then $M \in
\N^f$.

b)  For $E \in \F$ and $M \in \N^f$ we have $E
\otimes M \in \N^f$.
\end{lemma}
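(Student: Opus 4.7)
For part (a), the plan is to show that for every $m \in M$ the annihilator $\Ann(m) \subset U(\n)$ contains $(\Ker f)^N$ for some $N$. Local $\n$-finiteness then follows at once, since $U(\n)/(\Ker f)^N$ is finite-dimensional and $U(\n) \cdot m$ is a quotient of it. The finite-dimensionality is justified as follows: the restriction $f|_\n$ is a Lie algebra character (since $f([x,y]) = 0$) and hence vanishes on $[\n,\n]$, so the shift $x \mapsto x + f(x)$ defines a Lie algebra homomorphism $\n \to U(\n)$ that extends to an algebra automorphism $\tau$ of $U(\n)$ with $\tau(\Ker f) = \m_0 := U(\n) \cdot \n$; and $U(\n)/\m_0^N$ has a finite basis of truncated PBW monomials because $\dim \n < \infty$. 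Moreover $\Ker f$ is a two-sided ideal, so each filtration piece $M^{(k)} := \{m \in M : (\Ker f)^k m = 0\}$ is automatically $U(\n)$-stable.

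Setting $T_\alpha := E_\alpha - f(E_\alpha)$, the hypothesis says $\Ann(m) \supset J_m := \sum_{\alpha \in \Bb} U(\n) T_\alpha^{n_\alpha}$. The main obstacle is then the combinatorial step of showing that $J_m$ is itself of finite codimension in $U(\n)$, equivalently $(\Ker f)^N \subset J_m$ for some $N$. My approach is first to apply $\tau$ to reduce to the case $f = 0$, where $T_\alpha = E_\alpha$ and $\Ker f = \m_0$, and then to argue by induction on the height of a positive root $\beta$ that some power $E_\beta^{N_\beta}$ lies in $J_m$. For $\beta$ simple this is immediate from the hypothesis. For $\beta$ of height $\ge 2$, write $E_\beta = c[E_\gamma, E_\delta]$ with $\gamma, \delta$ of strictly smaller height; combining the inductively known $J_m$-nilpotence of $E_\gamma$ and $E_\delta$ with the Serre relations and the local $\ad$-nilpotence of simple root vectors on $U(\n)$ via the identity
\[
E_\alpha^n u = \sum_{k=0}^{n} \binom{n}{k}\, \ad(E_\alpha)^k(u)\, E_\alpha^{n-k}, \qquad u \in U(\n),
\]
yields the desired bound on $E_\beta$. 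Once every positive root vector is $J_m$-nilpotent, a direct PBW-degree argument produces $\m_0^N \subset J_m$ for $N$ sufficiently large.

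For part (b), the plan is to reduce directly to part (a) by a coproduct computation. The formula $\Delta(T_\alpha) = E_\alpha \otimes 1 + 1 \otimes T_\alpha$ (which follows from $\Delta(E_\alpha) = E_\alpha \otimes 1 + 1 \otimes E_\alpha$ together with $\Delta(c) = c(1 \otimes 1)$ for $c \in \C$), combined with the fact that $E_\alpha \otimes 1$ and $1 \otimes T_\alpha$ commute inside $U(\g) \otimes U(\g)$, gives via the binomial theorem
\[
\Delta(T_\alpha)^N(e \otimes m) \;=\; \sum_{k=0}^{N} \binom{N}{k}\, (E_\alpha^k e) \otimes (T_\alpha^{N-k} m).
\]
Since $E$ is finite-dimensional with $\n$ acting nilpotently on it (Engel applied to the image of $\n$ in $\End E$), $E_\alpha^d e = 0$ for $d := \dim E$; and $M \in \N^f$ gives $T_\alpha^Q m = 0$ for some $Q$. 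Choosing $N \ge d + Q - 1$ forces every summand to vanish, so $T_\alpha$ acts locally nilpotently on $E \otimes M$ for each $\alpha \in \Bb$. Part (a) then gives local $\n$-finiteness of $E \otimes M$; finite generation is inherited from $M$ using the coproduct on $U(\g)$, and local $Z$-finiteness is preserved under tensoring with finite-dimensional modules by the standard theory of projective functors. Altogether, $E \otimes M \in \N^f$.
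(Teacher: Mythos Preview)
Your argument is correct and follows essentially the same route as the paper's: for (a) both you and the authors twist the $\n$-action by the character $f$ to reduce to the situation where each simple root vector acts locally nilpotently, after which the paper simply cites \cite{BK}, Lemma~7.3.7, for local $\n$-finiteness while you sketch that lemma directly via induction on root height; for (b) the two proofs are identical, namely the binomial expansion of $(E_\alpha - f(E_\alpha))^n$ acting on $e \otimes m$ combined with nilpotence of $E_\alpha$ on $E$, followed by an appeal to part (a). Your height-induction sketch is somewhat terse---the passage from $E_\gamma^{N_\gamma}, E_\delta^{N_\delta} \in J_m$ to $[E_\gamma,E_\delta]^{N_\beta} \in J_m$ for a \emph{left} ideal $J_m$ is not quite as immediate as you suggest---but since the paper treats this step as a black-box citation, your strategy and the paper's coincide.
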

\begin{proof} a) Define a new $\n$-action on $M$ by $x * m =
(x - f(x))m$. Then the generators $E_\alpha$ of $\n$ acts
nilpotently on $m$. By \cite{BK}, Lemma 7.3.7. this implies that $U(\n)*m$ is finite dimensional for each $m \in M$. 

\medskip

\noindent
b) Let $e \otimes m \in E \otimes M$. Then
$(E_\alpha-f(E_\alpha))(e \otimes m) = E_\alpha e \otimes
(E_\alpha-f(E_\alpha))m$ and therefore by induction
$$(E_\alpha-f(E_\alpha))^n(e \otimes m) = \sum^n_{j=0}
\binom{n}{j}E^j_\alpha e \otimes (E_\alpha-f(E_\alpha))^{n-j}m.$$
Since $E$ is finite dimensional $E^j_\alpha e = 0$ for $j>>0$ and
since also $(E_\alpha-f(E_\alpha))^{n-j}m = 0$ for $n-j>>0$ we get
that the above sum vanishes for $n>>0$. Thus b) follows from a).
\end{proof}

\subsubsection{Standard Whittaker modules}\label{xxxyyy} See \cite{McD}, \cite{MS}. Let $f$ be fixed and
let $\p$ denote the parabolic subalgebra of $\g$ generated by $\b$ and $E_\alpha$ for $\alpha \in \Bb_f$. Let $\l$ be the reductive Levi factor of $\p$ and put  
$J^{(\l)}_\lambda = \Ann_{Z(\l)} U(\l) \otimes_{U(\b \cap \l)} \C_\lambda$, for $\lambda \in \h^*$.

Consider the $U(\l)$-module $U(\l)/J^{(\l)}_\lambda \otimes_{U(\n \cap \l)} \C_f$ as a $U(\p)$-module by means of the projection $\p \to \p/rad\, \p \cong \l$ and define the standard Whittaker module
$$
\D_\lambda(f) = U \otimes_{U(\p)} (U(\l)/J^{(\l)}_\lambda \otimes_{U(\n\cap \l)} \C_f), \; \lambda \in \h^*.
$$
Note that when $f$ is the trivial homomorphism then  $\D_\lambda(f) =\D_\lambda$. $\D_\lambda(f) $ has a unique irreducible quotient
$L_\lambda(f) $.

Also, define $$\D^n_\lambda(f) = U \otimes_{U(\p)} (U(\l)/(J^{(\l)}_\lambda)^n \otimes_{U(\n \cap \l)} \C_f)$$
for $n \geq 1$.  

\subsection{The Whittaker functor}
For a $\g$-module $V$ we define
$$\Gamma_f(V) = \{v \in V| (\Ker f)^nv = 0,
\operatorname{for} n >> 0\}$$ and 
$$Wh_f(V) = \{v \in V| (\Ker f)v = 0\}.$$ Then $\Gamma_f(V)$ is a $\g$-submodule of $V$ while $Wh_f(V)$ is merely a $Z$-submodule.
It is clear that $\Gamma_f$ and $Wh_f$ yield functors. $Wh_f$ was introduced by Kostant, \cite{K}.

For $M = \oplus_{\lambda \in \h^*} M^\lambda \in \O$ 
we
define the completion $\overline{M} = \prod_{\lambda \in \h^*}
M^\lambda$. This has a natural $\g$-module structure making $M
\subseteq \overline{M}$ a submodule. Let $Z-\mod_{fd}$ denote the category of finite dimensional $Z$-modules. In \cite{Bac} the second author introduced the functors
$$
\overline{\Gamma}_f : \O \to \N^f, \ M \mapsto  \Gamma_f(\overline{M}) \hbox{ and }
$$
$$
\overline{Wh}_f : \O \to Z-\mod_{fd}, \ M \mapsto  Wh_f(\overline{M}).
$$
\emph{$\oGf$ is called the Whittaker functor}; it is exact for any $f$. The functor $\overline{Wh}_f$ is exact if and only if $f$ is non-degenerate. These functors commute with the action of $Z$.
Assume that $f$ is non-degenerate:  \cite{K} (see also \cite{MS1} for a geometric proof) showed that the functor
$$
Wh_f: \N^f \to Z-\mod_{fd}
$$
is an equivalence of  categories; its quasi-inverse is $M \mapsto U\otimes_{U(\n) \otimes Z} M$. Here the left $Z$-module structure on $M$ is the given one and the $U(\n)$-module structure is the unique one such that
$\Ker f \cdot M = 0$. 

In \cite{Bac} it was proved that
\begin{proposition}\label{mythesis} For any $\lambda \in \h^*$ we have 
$\overline{\Gamma}_f(\D_\lambda) \cong \D_\lambda(f)$. If, moreover, $\lambda$ is dominant and $x \in {}^\mu W $ then
$\overline{\Gamma}_f(L_{x \cdot \lambda}) \cong L_{x \cdot \lambda}(f)$ and for  $x \notin {}^\mu W$ we have $\overline{\Gamma}_f(L_{x \cdot \lambda})=0$.
\end{proposition}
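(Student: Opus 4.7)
The plan is to first establish the Verma assertion by building an explicit isomorphism $\D_\lambda(f) \isoto \overline{\Gamma}_f(\D_\lambda)$, then deduce the statement about simples from exactness of $\oGf$ and a composition-series comparison. By the Frobenius-reciprocity description of $\D_\lambda(f) = U \otimes_{U(\p)} M$ with $M = U(\l)/J^{(\l)}_\lambda \otimes_{U(\n \cap \l)} \C_f$, the task reduces to exhibiting a ``universal Whittaker vector'' $w_\lambda \in \overline{\Gamma}_f(\D_\lambda)$ whose $U(\p)$-orbit realises $M$ (with $\n^\p$ acting as zero, $\n \cap \l$ by $f$, and $Z(\l)$ through the character corresponding to $\lambda$).

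To construct $w_\lambda$, I start from the highest-weight vector $v_\lambda \in \D_\lambda \subset \overline{\D_\lambda}$ and solve the system $(E_\alpha - f(E_\alpha))w_\lambda = 0$ for $\alpha \in \Bb_f$, together with $E_\alpha w_\lambda = 0$ for $\alpha \in \Bb \setminus \Bb_f$, by adding corrections in weight $\lambda - \nu$ inductively on the height of $\nu \in Q^+$. Each equation is triangular with respect to the weight grading, and the corrections exist because the action of $E_\alpha$ on the relevant weight spaces of $\D_\lambda$ is surjective by standard Verma-module arguments. The completion $\overline{\D_\lambda}$ is the correct home to assemble the infinite sum of corrections. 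By Lemma \ref{Gabberlemma}(a), $w_\lambda$ lies in $\Gamma_f(\overline{\D_\lambda})$. A parallel construction, replacing $v_\lambda$ by $U(\l)$-translates modulo $J^{(\l)}_\lambda$, produces enough vectors to induce a $U$-module map $\phi: \D_\lambda(f) \to \overline{\Gamma}_f(\D_\lambda)$.

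Injectivity of $\phi$ follows from a leading-term argument: the weight-$\lambda$ component of $w_\lambda$ is $v_\lambda$, so on associated graded (with respect to the weight filtration) $\phi$ reduces to the PBW inclusion $U(\n^-) \otimes M^{\mathrm{wt}} \hookrightarrow \overline{\D_\lambda}$. Surjectivity is the main obstacle. I would approach it by comparing weight-space dimensions: both $\D_\lambda(f)$ and $\overline{\Gamma}_f(\D_\lambda)$ admit a description as the opposite nilradical acting freely on a finite-dimensional $U(\l)$-Whittaker piece of Kostant type, and the dimensions match term-by-term once one verifies that a $\Gamma_f$-vector in $\overline{\D_\lambda}$ is determined by its projection onto the ``$\l$-layer'' — this is the technical heart, relying on the fact that the $\Ker f$-adic filtration on $\overline{\D_\lambda}$ has associated graded governed by the PBW basis of $U(\n^-)$ over $M$.

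For the simple assertion, assume $\lambda$ is dominant. Applying the exact functor $\oGf$ to a composition series of $\D_\lambda$ produces a filtration of $\D_\lambda(f) \cong \oGf(\D_\lambda)$ whose subquotients are the modules $\oGf(L_{x \cdot \lambda})$ for $x \in W^\lambda$, each of which, being a quotient of $\oGf(\D_{x\cdot\lambda}) \cong \D_{x\cdot\lambda}(f)$, is either zero or has simple head $L_{x\cdot\lambda}(f)$. By Mili\v{c}i\'{c}--Soergel \cite{MS}, the composition factors of $\D_\lambda(f)$ are precisely the $L_{x\cdot\lambda}(f)$ for $x \in {}^\mu W$, with multiplicities matching $[\D_\lambda : L_{x\cdot\lambda}]$ in the Verma-module case. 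Counting multiplicities on both sides then forces $\oGf(L_{x \cdot \lambda}) \cong L_{x \cdot \lambda}(f)$ for $x \in {}^\mu W$ and $\oGf(L_{x \cdot \lambda}) = 0$ for $x \notin {}^\mu W$.
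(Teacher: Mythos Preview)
The paper does not prove this proposition; it is quoted from \cite{Bac}. So there is no in-paper argument to compare against, and I comment on your sketch on its own terms.

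Your approach to the Verma statement --- building a Whittaker vector $w_\lambda \in \overline{\D_\lambda}$ by weight-inductive corrections and inducing a $U$-map from $\D_\lambda(f)$ --- is the natural one and is essentially the strategy of \cite{Bac}. Injectivity via leading terms is fine. Surjectivity is, as you acknowledge, the technical heart, and your sketch does not yet establish it. A cleaner route than raw dimension-counting is to reduce to Kostant's theorem for the Levi $\l$: as a $U(\l)$-module the completion $\overline{\D_\lambda}$ decomposes along the opposite parabolic nilradical into completions of Verma modules for $\l$, and the non-degenerate case for $\l$ identifies the $f$-Whittaker vectors in each layer with the corresponding piece of $\D_\lambda(f)$.

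Your argument for simples is correct in outline, but ``counting multiplicities forces\ldots'' hides an induction that deserves to be made explicit. It is cleaner to first prove the vanishing for $x \notin {}^\mu W$ directly: choose a simple $s \in W_f$ with $sx > x$; then $\D_{sx\cdot\lambda} \subset \Ker(\D_{x\cdot\lambda} \twoheadrightarrow L_{x\cdot\lambda})$, and applying the exact functor $\oGf$ gives an injection $\D_{sx\cdot\lambda}(f) \hookrightarrow \D_{x\cdot\lambda}(f)$ between isomorphic finite-length modules (same $W_f$-coset), hence an isomorphism, forcing $\oGf(L_{x\cdot\lambda})=0$. With vanishing in hand, downward Bruhat induction over $x \in {}^\mu W$, together with the multiplicity identity $[\D_{x\cdot\lambda}(f):L_{y\cdot\lambda}(f)] = [\D_{x\cdot\lambda}:L_{y\cdot\lambda}]$ (which follows from the equivalence $\sigma$ of \cite{MS} combined with translation and the inversion symmetry encoded by $\kappa$), leaves exactly length one for $\oGf(L_{x\cdot\lambda})$, whence it equals $L_{x\cdot\lambda}(f)$.
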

\subsubsection{} Let $\lambda$ be integral and dominant. 
Let $J = \Ann_{Z}(P_{w_0\cdot \lambda})$ and put $C = Z(g)/J$.
Let $\V: \O_{\hlambda} \to Z-\mod$ be Soergel's functor $\V(M)
= \Hom_{\O}(P_{w_0\cdot \lambda}, M)$ \cite{Soergel}. Thus, $C = \V(P_{w_0\cdot
\lambda})$. Since $\V$ is fully faithful on projective objects we
conclude that $J\O_{\hlambda} = 0$. Thus, also $J
\overline{\Gamma}_f(M)= J \overline{Wh}_f(M) = 0$ for $M \in
\O_{\hlambda}$.
The following result was proved in \cite{Bac}. We include here a short proof that Soergel once explained to us.
\begin{proposition}\label{aprop} Assume that $f$ is non-degenerate. Then $\overline{Wh}_f |_{\O_\hlambda}$ is equivalent to $\V$.
\end{proposition}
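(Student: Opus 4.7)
The plan is to construct a natural transformation $\eta: \V \to \overline{Wh}_f|_{\O_\hlambda}$ by Yoneda and then upgrade it to a natural isomorphism by d\'evissage along composition series. Since $\V(-) = \Hom_\O(P_{w_0\cdot \lambda},-)$ is corepresentable, specifying $\eta$ is the same as choosing an element $w \in \overline{Wh}_f(P_{w_0\cdot \lambda})$, and then setting $\eta_M(\phi) := \overline{Wh}_f(\phi)(w)$. Before fixing $w$, I would record that both functors take values in $C\text{-}\mod_{fd}$ (as $J$ annihilates $\O_\hlambda$, by the observation preceding the proposition) and that both are exact: $\V$ because $P_{w_0\cdot \lambda}$ is projective, and $\overline{Wh}_f$ by the paragraph just above, where the non-degeneracy of $f$ is the essential hypothesis.

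Next I would compute both functors on every simple $L_{x\cdot \lambda} \in \O_\hlambda$. For $\V$ this is immediate: $\V(L_{w_0\cdot \lambda}) \cong \C$ while $\V(L_{x\cdot \lambda}) = 0$ for $x \neq w_0$, because $P_{w_0\cdot \lambda}$ is the projective cover only of $L_{w_0\cdot \lambda}$. For $\overline{Wh}_f$, Proposition \ref{mythesis} applied with $\mu$ chosen so that $W_\mu = W_f = W$ (and hence ${}^\mu W = \{w_0\}$) gives $\overline{\Gamma}_f(L_{x\cdot \lambda}) = 0$ for $x \neq w_0$ and $\overline{\Gamma}_f(L_{w_0\cdot \lambda}) = L_{w_0\cdot \lambda}(f)$, which is a simple object of $\N^f_\hlambda$; Kostant's equivalence $Wh_f$ then identifies the latter with the unique simple $Z$-module $Z/J_\lambda \cong \C$, yielding $\overline{Wh}_f(L_{w_0\cdot \lambda}) \cong \C$.

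To pin down $w$ I would use exactness: the surjection $P_{w_0\cdot \lambda}\twoheadrightarrow L_{w_0\cdot \lambda}$ induces a surjection $\overline{Wh}_f(P_{w_0\cdot \lambda})\twoheadrightarrow \overline{Wh}_f(L_{w_0\cdot \lambda}) \cong \C$, and I pick $w$ lying over a nonzero element. With this choice $\eta_{L_{w_0\cdot \lambda}}$ is a nonzero endomorphism of $\C$, hence an isomorphism, while on the remaining simples $\eta$ is trivially the isomorphism $0 \to 0$. A five-lemma induction along any composition series, combined with exactness of both $\V$ and $\overline{Wh}_f$, then promotes $\eta$ to an isomorphism $\eta_M$ for every $M \in \O_\hlambda$. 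The single genuine obstacle is the simple-object identification $\overline{Wh}_f(L_{w_0\cdot \lambda}) \cong \C$, which is where the non-degeneracy of $f$ is consumed through Kostant's equivalence; once that is settled, the rest is a purely formal Yoneda-plus-d\'evissage argument.
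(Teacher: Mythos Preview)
Your proof is correct and follows essentially the same approach as the paper's own argument: construct a natural transformation via Yoneda using a well-chosen element of $\overline{Wh}_f(P_{w_0\cdot\lambda})$, verify it is an isomorphism on every simple, and conclude by exactness and the five lemma. The only difference is that you spell out, via Proposition~\ref{mythesis} and Kostant's equivalence, why $\overline{Wh}_f(L_{x\cdot\lambda})$ vanishes for $x\neq w_0$ and is one-dimensional for $x=w_0$, whereas the paper simply asserts these facts.
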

\begin{proof} The assumption on $f$ implies that  $\overline{Wh}_f$ is exact. Let $v \in \overline{Wh}_f(P_{w_0\cdot \lambda})$ be
such that $\overline{v} \neq 0$ in $\overline{Wh}_f(L_{w_0\cdot
\lambda})$ under the surjection $\overline{Wh}_f(P_{w_0\cdot
\lambda}) \twoheadrightarrow \overline{Wh}_f(L_{w_0\cdot \lambda})$ induced by
the surjection $P_{w_0 \cdot \lambda} \twoheadrightarrow L_{w_0\cdot \lambda}$. By Yoneda Lemma
$$\Hom_{functors}(\V,\overline{Wh}_f) = \Hom_{Z}(\V(P_{w_0\cdot
\lambda}),\overline{Wh}_f(P_{w_0\cdot \lambda})) =$$
$$
= \Hom_{Z}(C,\overline{Wh}_f(P_{w_0\cdot \lambda})) =
\Hom_{C}(C,\overline{Wh}_f(P_{w_0\cdot \lambda}))
=\overline{Wh}_f(P_{w_0\cdot \lambda}).
$$
Let $h: \V \to \overline{Wh}_f$ be the natural transformation
that corresponds to $v \in \overline{Wh}_f(P_{w_0\cdot \lambda})$.
Then we see that $h_{L_{w_0\cdot \lambda}}: \V(L_{w_0\cdot
\lambda}) \to   \overline{Wh}_f(L_{w_0\cdot \lambda})$ is an
isomorphism since it is non-zero and both sides are one
dimensional vector spaces. Also, since $\V(L_{x\cdot \lambda}) =
\overline{Wh}_f(L_{x\cdot \lambda}) = 0$ for $x \neq w_0$ we get
that  $h_{L_{x \cdot \lambda}}$ is an isomorphism as well.
Hence, by exactness of both functors and the five lemma
$h_{M}$ is an isomorphism for all $M \in \O_{\hlambda}$.
\end{proof}
\subsection{Projective functors}\label{Projective functors Section}
Let $E \in \F$ and $M \in \M$; then there is the functor
$$
T_E: \M \to \M, \ T_E(M) = E \otimes M.
$$
A projective functor from $\M_\hlambda$ to $\M_\hmu$ is a direct summand in a functor $pr_\mu T_E i_\lambda$. Projective functors are exact and they have left and right adjoints which coincide.

By \cite{BG} we have the following important: Assume that $\lambda$ is dominant and let $\P, \P': \M_\hlambda \to \M_\hmu$ be projective functors.
Let $g \in \Hom(\P(\D_\lambda), \P'(\D_\lambda))$. Then there is a natural transformation $\phi: \P \to \P'$ such that $\phi(\D_\lambda) = g$. Moreover, $\phi$ is uniquely determined by $g$ if $g$ is an isomorphism and if $g$ is an idempotent then we can chose $\phi$ to be an idempotent as well. Thus decomposing $\P(\D_\lambda)$ into a direct sum of indecomposables and decomposing $\P$ is the same thing.

Assume that $\lambda, \mu \in \h^*$ such that $\mu-\lambda$ is integral and  $W_\lambda \subseteq W_\mu$. Let $V$ be a finite dimensional irreducible $\g$-module with extremal weight $\mu-\lambda$. Then there is the \emph{translation to the wall functor}
\begin{equation}\label{ontowall}
\Theta^\mu_\lambda: \M_{\hlambda} \to \M_{\hmu}, \; M \mapsto pr_\mu(T_V(M)).
\end{equation}
Its left and right adjoint is \emph{translation out of the wall};
\begin{equation}\label{outofwall}
\Theta^\lambda_\mu: \M_{\hmu} \to \M_{\hlambda}, \; M \mapsto pr_\lambda(T_{V'}(M)).
\end{equation}
Here $V'$ is a finite dimensional irreducible $\g$-module with extremal weight $\lambda-\mu$. 
We have $\Theta^\mu_\lambda \Theta^\lambda_\mu \cong Id^{|W_\mu|}_{\M_\hmu}$.

\subsubsection{Projective functors on $\O$}\label{transOnO} 
Any projective functor $\P: \M_\hlambda \to \M_\hmu$ descends to functors $\P: \O_\hlambda \to \O_\hmu$ and $\P: \O'_\hlambda \to \O'_\hmu$.  Those are the projective functors on $\O$ and $\O'$;
they are exact, maps projectives to projectives and commute with duality: $\P(M^*) \cong \P(M)^*$.

Let $\lambda$ and $\mu$ be as in Section \ref{Projective functors Section}.  Thus we get translation to and out of the wall:
$\Theta^\mu_\lambda: \O_{\hlambda} \to \O_{\hmu}$
and 
$\Theta^\lambda_{\mu}: \O_{\hmu} \to \O_{\hlambda}$.

We have
$$
\Theta^\mu_\lambda \D_{x \cdot \lambda} \cong \D_{x \cdot \mu}, \, x \in W,
$$
$$
\Theta^\mu_\lambda L_{x \cdot \lambda} = L_{x \cdot \mu}, \, x \in W^\mu ,
$$
and
$\Theta^\mu_\lambda L(x \cdot \lambda)  = 0$,  for $x \notin W^\mu$.

For any projective module $P \in \O_\hlambda$ there is a unique (upto isomorphism) projective functor $\P: \O_\hlambda \to \O_\hlambda$ such that $\P(\D_\lambda) = P$.

\subsubsection{Projective functors on $\N^f$}
By Lemma \ref{Gabberlemma} any projective functor $\P: \M_\hlambda \to \M_\hmu$ descends to a functor $\P: \N^f_\hlambda \to \N^f_\hmu$.  Those are our projective functors on $\N^f$. 
\footnote{Contrary to the case of $\O$ it is a priori not clear whether an indecomposable projective functor on $\M$ when restricted to $\N^f$ remains indecomposable. 
However we only need those projective functors on $\N^f$ that are restrictions of projective endofunctors of $\M$.}

\subsection{Harish-Chandra bimodules and Soergel-Mili\v{c}i\'{c}'s equivalence.}
Throughout this section we fix dominant integral weights $\lambda$ and $\mu$ with $\lambda$ regular. \footnote{One could weaken the integrality condition to $\lambda- \mu$ is integral here. But for the sake of simplicity we have assumed both $\lambda$ and $\mu$ are integral.}
\subsubsection{}
Let $X$ be a $U-U$-bimodule. Then we have the adjoint action $ad$
of $\g$ on $X$ given by $ad(g)x = gx-xg$ and the sub-bimodule $X_{adf} \subseteq X$ consisting of ad-finite vectors. If $X$ is ad-finite (i.e. if $X = X_{adf}$) then $X$ is finitely generated as a bimodule iff X is finitely generated as a left module iff $X$ is finitely generated as a right module.

The category of Harish
Chandra bimodules $\H$ is the category of finitely generated $U-U$-bimodules
which are locally finite with respect to the adjoint action of
$\g$ and to the left (or equivalently the right) action of the center $Z$. The category of Harish-Chandra bimodules
decomposes into blocks ${}_{\hlambda}\H_{\hmu} := {}_{\hlambda}\H \cap \H_{\hmu}$, for $\lambda, \mu \in \h^*$, where
$$
{}_{\hlambda}\H = \{X \in \H | \; \exists n > 0, J^n_{\lambda} X = 0\},
$$
$$
\H_{\hmu} = \{X \in \H | \; \exists n > 0, X J^n_{\mu}  = 0\}.
$$
Similarly we define ${}_{\lambda}\H , \H_\mu, {}_{\lambda} \H_{\mu}$ and
${}_{\hlambda} \H_{\mu}$.
 There is an autoequivalence $V \mapsto s(V) =: V^s$ of $\H$ where $V^s = V$ as a set and the $U-U$-bimodule action is given by $u * v * u' := (u')^tvu^t$ for $v \in V^s$. 
 Since the Chevalley involution fixes $Z$ we have  $({}_{\hlambda}\H_{\hmu})^s =  {}_{\hmu}\H_{\hlambda}$.

 \medskip

 \noindent
 For $E \in \F$ we consider the $U-U$-bimodule $E^l = E$ as a set and with action $u * e * u' = ue$ and the $U-U$-bimodule $E^r = E$ as a set and action
 $u * e * u' = (u')^t e$.  Note that $(E^l)^s = E^r$.

 Consider the canonical projections $(pr_\mu)^l: \H \to {}_{\hmu}\H$ and $(pr_\mu)^r: \H \to \H_{\hmu}$. Since the left and right $U$-action commute
 we see that  $(pr_\mu)^l(\H_{\lambda}) = {}_{\hmu}\H_{\lambda}$ $(pr_\mu)^l(\H_{\hlambda}) = {}_{\hmu}\H_{\hlambda}$ and similarly for $(pr_\mu)^r$. If $V$ is any finite dimensional
 bimodule and $X \in \H$ then $V \otimes X \in \H$ with the canonical bimodule structure. Note that $(V \otimes X)^s = V^s \otimes X^s$.
 
 Similarly, if $\P$ is a projective functor on $\M$ we get projective functors $\P^l, \P^r: \H \to \H$.

\subsubsection{Equivalences with category $\O$}
 By results of Bernstein and Gelfand \cite{BG}, Soergel \cite{S} and Soergel and Mili\v{c}i\'{c}  \cite{MS} we have mutually inverse equivalences
 $$
 F_{\mu}: \O'_{\hmu} \leftrightarrows {}_{\hmu}\H_{\hlambda}: G_{\mu}
 $$
 where $F_{\mu}(X) = {\varinjlim}_n \hlambdam_\C(\D^n_\lambda, X)_{adf}$ and $G_{\mu}(Y) = {\varprojlim}_n Y \otimes_U \D^n_\lambda$. 
 (Here the $U$-bimodule structure on $\Hom_\C(\D^n_\lambda, X)$ is given by $(ufu')(m) = u\cdot f(u'm)$.)

 These functors restrict to equivalences
  $$
 F_{\mu}: \O_{\hmu} \leftrightarrows {}_{\hmu}\H_{{\lambda}}: G_{\mu}
 $$
  where $F_{\mu}(X) =  \hlambdam_\C(\D_\lambda, X)_{adf}$ and $G_{\mu}(Y) = Y \otimes_U \D_\lambda$ which in turn restrict to equivalences
 $$
 F_{\mu} : \O_{\mu} \leftrightarrows {}_{\mu}\H_{{\lambda}}: G_{\mu}.
 $$
 Let $\kappa: \O_\hlambda \isoto \O'_\lambda$ denote the equivalence which is defined as the composition
 \begin{equation}\label{kappa}
\kappa:  \O_\hlambda \overset{F_\lambda}{\to} {}_{\hlambda}\H_{\lambda} \overset{s}{\to} {}_{\lambda}\H_{\hlambda} \overset{G_\lambda}{\to} \O'_\lambda.
\end{equation}
Then we have
\begin{equation}\label{kappa1}
\kappa(\D_{x \cdot \lambda}) \cong \D_{x^{-1} \cdot \lambda}, \ x \in W.
\end{equation}
 
 \subsubsection{Soergel Mili\v{c}i\'{c}'s equivalence}
Assume now that  $W_f = W_\mu$. Soergel and Mili\v{c}i\'{c}  \cite{MS} constructed the equivalence
$$
G_f:{}_{\widehat{{\lambda}}} \H_{\hmu} \to
\N^f_{\widehat{{\lambda}}},\ X \mapsto {\varprojlim}_n X \otimes_U
\Delta^n_\mu(f).
$$
Thus we get the equivalence $\sigma = G_f \circ s \circ F_\mu: \O_{\hmu}
\to \N^f_{\hlambda}$. It is known that
\begin{equation}\label{sigma1}
\sigma(\D_{x \cdot \mu}) \cong \D_{x^{-1} \cdot \lambda}(f),\;
x \in W.
\end{equation}
 The following lemma is easy to prove, for details
see \cite{J}. 
\begin{lemma}\label{commutelemmaA} For $M \in \O_{\lambda}$ we have a natural isomorphism $E^l \otimes F_{\lambda}(M) \isoto F_{\lambda}(E \otimes M)$, $e \otimes \phi \mapsto
\{x \mapsto e \otimes \phi(x)\}$. It induces an isomorphism $(pr_\lambda)^l(E^l \otimes F_{\lambda}(M)) \isoto F_{\lambda}(pr_\lambda(E \otimes M))$. Similarly, we have
a natural isomorphism $(\Theta^\mu_\lambda)^l F_{\lambda}(M) \cong F_{\mu}\Theta^\mu_\lambda(M)$.
\end{lemma}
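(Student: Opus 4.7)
The lemma is a direct calculation, as the authors note, but it is worth isolating the key ingredients.

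My plan is to prove the first isomorphism by checking the formula gives a $U$-bimodule map that is an isomorphism on underlying vector spaces, and then observe that the second and third assertions follow by carefully tracking how the bimodule iso interacts with the relevant projections onto generalized eigenspaces of the left $Z$-action.

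First, for any vector space $W$ and any finite dimensional $E$, the standard map $\Phi\colon E \otimes \Hom_\C(W,M) \isoto \Hom_\C(W, E\otimes M)$ sending $e\otimes \phi$ to $x\mapsto e\otimes \phi(x)$ is an isomorphism of vector spaces. Taking $W=\D_\lambda$ and endowing both sides with the bimodule structures described in the paper (left $U$-action diagonal on $E^l\otimes F_\lambda(M)$ since the right action on $E^l$ is trivial; bimodule action on $\Hom_\C(\D_\lambda, E\otimes M)$ given by $(u\phi u')(x)=u\cdot \phi(u'x)$) one verifies directly from the formulas that $\Phi$ is a map of $U$-bimodules.

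Next, I have to argue that $\Phi$ restricts to the ad-finite parts. The inclusion $E^l\otimes F_\lambda(M)\subseteq (E^l\otimes \Hom_\C(\D_\lambda,M))_{adf}$ is immediate because the tensor product of two ad-locally finite bimodules (with diagonal adjoint action) is ad-locally finite. For the reverse inclusion I would use the following standard fact: if $E$ is a finite dimensional $\g$-module and $B$ any $\g$-module under the adjoint action, then $(E\otimes B)_{adf}=E\otimes B_{adf}$; this is proved by fixing a basis $\{e_i\}$ of $E$ and noting that the evaluation $E^*\otimes E\otimes B\to B$ is $\g$-equivariant, so if $\xi=\sum e_i\otimes \phi_i$ lies in $(E\otimes B)_{adf}$ then each $\phi_i=(e_i^*\otimes 1)(\xi)$ lies in $B_{adf}$. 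Applied to $B=\Hom_\C(\D_\lambda,M)$ together with the vector-space iso $\Phi$, this yields the desired iso of bimodules $E^l\otimes F_\lambda(M)\isoto F_\lambda(E\otimes M)$.

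For the projection statement, observe that the left $Z$-action on $\Phi(e\otimes \phi)$ is $(z\Phi(e\otimes \phi))(x)=z\cdot(e\otimes \phi(x))$, which is computed entirely inside $E\otimes M$. Hence a vector in $F_\lambda(E\otimes M)$ lies in the generalized $\lambda$-eigenspace of the left $Z$-action iff its image lands in $pr_\lambda(E\otimes M)$. Since $\Phi$ is a bimodule iso it restricts to an iso between $(pr_\lambda)^l(E^l\otimes F_\lambda(M))$ and the set of such $\phi$, which is exactly $F_\lambda(pr_\lambda(E\otimes M))$. Finally, the translation statement is obtained by specializing $E$ to the finite dimensional irreducible $V$ with extremal weight $\mu-\lambda$ and replacing $(pr_\lambda)^l$ with $(pr_\mu)^l$, using that $F_\lambda$ and $F_\mu$ are both given by $\Hom_\C(\D_\lambda,-)_{adf}$.

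The only slightly delicate point is the identification $(E\otimes B)_{adf}=E\otimes B_{adf}$; after that everything reduces to rewriting formulas.
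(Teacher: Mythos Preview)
Your argument is correct and supplies exactly the verification the paper omits: the paper does not prove this lemma at all but simply states it is ``easy to prove'' and refers to \cite{J} for details. Your check that $\Phi$ is a $U$--$U$-bimodule map, the identification $(E\otimes B)_{adf}=E\otimes B_{adf}$ via the $\g$-equivariant evaluation $E^*\otimes E\otimes B\to B$, and the observation that both $F_\lambda$ and $F_\mu$ are given by $\Hom_\C(\D_\lambda,-)_{adf}$ (so that the projection statements follow by tracking the left $Z$-action through $\Phi$) are all sound and constitute a complete proof.
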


\section{The main results.}
Throughout this section we assume that
$\lambda$ is a regular dominant integral weight and $\mu$ a dominant integral weight. Let $f: U(\n) \to \C$ be a character such that $\Bb_f = \Bb_\mu$.

\begin{proposition}\label{commutingprop}   a.) For any projective functor $\P: \M \to \M$ there is a natural isomorphism $\Theta_\P:  \P \circ \oGf \to   \oGf \circ \P $ between functors from
$\O$ to $\N^f$.
b) For any morphism $\phi: \P \to \P'$ of projective functors we have a commutative diagram
\[ \xymatrix{ \P  \circ  \oGf \ar[rr]^{  \phi   \circ \oGf } \ar[d]_{\Theta_\P} &  &  \P'  \circ \oGf \ar[d]^{\Theta_{\P'}} \\  \oGf  \circ  \P  \ar[rr]_{  \oGf \circ \phi  } & &\oGf\circ   \P'   }  \]
\end{proposition}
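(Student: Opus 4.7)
The plan is to construct $\Theta_\P$ first for the basic tensor functors $\P = T_E$ with $E \in \F$, and then extend to a general projective functor via its presentation as a direct summand of some $pr_\mu T_E i_\lambda$. The construction of $\Theta_{T_E}$ factors through two natural isomorphisms. First, for $M \in \O$ I establish a natural $\g$-module isomorphism
$$\alpha^E_M : E \otimes \overline{M} \isoto \overline{E \otimes M};$$
choosing an $\h$-weight basis $e_1,\ldots,e_n$ of $E$ with $\operatorname{wt}(e_i)=\eta_i$, both sides identify with $\prod_\gamma \bigoplus_i e_i \otimes M^{\gamma-\eta_i}$, the identification being valid because the direct sum is finite and hence commutes with the direct product. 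Second, for any $\g$-module $X$ I claim the natural inclusion
$$E \otimes \Gamma_f(X) \hookrightarrow \Gamma_f(E \otimes X)$$
is a bijection. The forward direction is Lemma~\ref{Gabberlemma}(b). For surjectivity, note that the counit $\epsilon : E^* \otimes E \otimes X \to X$, $e^* \otimes e \otimes x \mapsto e^*(e)\,x$, is $U$-linear and therefore sends $\Gamma_f$ to $\Gamma_f$. Given $\xi \in \Gamma_f(E \otimes X)$ and any $e^* \in E^*$, we have $e^* \otimes \xi \in E^* \otimes \Gamma_f(E \otimes X) \subseteq \Gamma_f(E^* \otimes E \otimes X)$ by Lemma~\ref{Gabberlemma}(b) applied to $E^*$, so $\epsilon(e^* \otimes \xi) \in \Gamma_f(X)$. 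Writing $\xi = \sum_i e_i \otimes x_i$ in a basis and picking dual-basis elements $e^*_j$ gives $x_j = \epsilon(e^*_j \otimes \xi) \in \Gamma_f(X)$ for every $j$, so $\xi \in E \otimes \Gamma_f(X)$. Composing these isomorphisms defines
$$\Theta_{T_E}(M): T_E\, \oGf(M) = E \otimes \Gamma_f(\overline{M}) \isoto \Gamma_f(E \otimes \overline{M}) \isoto \Gamma_f(\overline{E \otimes M}) = \oGf\, T_E(M),$$
natural in $M$.

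To handle a general projective functor $\P$, note that $\oGf$ is $Z$-linear and so commutes with block projections $pr_\mu$ and inclusions $i_\lambda$; this immediately produces $\Theta_{pr_\mu T_E i_\lambda}$. For $\P$ a direct summand of $pr_\mu T_E i_\lambda$ cut out by an idempotent, I define $\Theta_\P$ by restricting $\Theta_{pr_\mu T_E i_\lambda}$ to the summand on each side. For part~(b), I first treat a natural transformation $\phi : T_E \to T_{E'}$. The two composites $\Theta_{T_{E'},M} \circ \phi_{\oGf(M)}$ and $\oGf(\phi_M) \circ \Theta_{T_E,M}$ both land in $\Gamma_f(\overline{E' \otimes M})$, and a direct comparison of $\gamma$-weight components on a weight-homogeneous vector $e \otimes x$ shows they agree: the match is forced by the $\h$-linearity of $\phi_M$, which ensures that the weights appearing in the output tensor align with the input weight $\operatorname{wt}(e)$. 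Combined with the naturality of $\phi$ with respect to the inclusion $\Gamma_f(\overline{M}) \hookrightarrow \overline{M}$, this yields the commutative diagram. A general morphism $\phi : \P \to \P'$ between direct summands extends through the defining idempotents to a morphism $\tilde\phi : pr_\mu T_E i_\lambda \to pr_\mu T_{E'} i_\lambda$ between the enveloping functors, reducing to the basic case.

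The main technical obstacle is the surjectivity step $\Gamma_f(E \otimes X) \subseteq E \otimes \Gamma_f(X)$: the forward inclusion from Lemma~\ref{Gabberlemma}(b) is easy, but recovering individual tensor factors of an element of $\Gamma_f(E \otimes X)$ requires the counit-map trick, which supplies $U$-linear ``coordinate projections'' compatible with $\Gamma_f$. With this in hand, part~(b) reduces to a weight-space bookkeeping calculation.
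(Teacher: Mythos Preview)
Your argument for part~(a) is correct and takes a genuinely different route from the paper. For the surjectivity $\Gamma_f(E\otimes X)\subseteq E\otimes\Gamma_f(X)$ the paper orders a weight basis of $E$ so that $\operatorname{Span}\{e_2,\ldots,e_k\}$ is a $U(\n)$-submodule and then peels off the coefficients $m_i$ one at a time by an inductive computation with $(E_\alpha-f(E_\alpha))^n$. Your counit trick---pairing with $e^*\in E^*$ through the $U$-linear evaluation $E^*\otimes E\otimes X\to X$---is more conceptual: it produces $U$-linear ``coordinate projections'' that send $\Gamma_f$ to $\Gamma_f$ and immediately yields each $x_j\in\Gamma_f(X)$, with no induction and no choice of ordering.

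Your treatment of part~(b), however, has a real gap. You invoke ``naturality of $\phi$ with respect to the inclusion $\Gamma_f(\overline M)\hookrightarrow\overline M$'', but $\overline M$ is not an object of $\M$ (it is not finitely generated), so the natural transformation $\phi:\P\to\P'$, which is only given on $\M$, is not a priori defined at $\overline M$. Nor is there any $U$-module map in $\M$ between $M$ and $\Gamma_f(\overline M)$ that would let you transport $\phi_M$ to $\phi_{\oGf(M)}$ by naturality. The $\h$-linearity of $\phi_M$ controls how weights move inside $E'\otimes M$, but it says nothing about how $\phi_{\oGf(M)}$ acts on $E\otimes\Gamma_f(\overline M)$; those are values of $\phi$ at two different objects with no map between them in $\M$. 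The paper closes this gap by producing a single object of $\M$ that maps to both: pick a cofinite ideal $\mathfrak a\subset Z$ annihilating $M$, $T_E(M)$ and $\oGf(M)$, set $\overline U=U/\mathfrak a\in\M$, and write $\phi_{\overline U}(e_i\otimes 1)=\sum_j e'_j\otimes u_{ij}$. Naturality along the map $\overline U\to A$, $1\mapsto a$, then forces the \emph{uniform} formula $\phi_A(e_i\otimes a)=\sum_j e'_j\otimes u_{ij}a$ for every $\overline U$-module $A$ in $\M$, in particular for $A=M$ and $A=\oGf(M)$; since the same formula also describes the continuous extension $\overline{\phi_M}$ on $E\otimes\overline M$, the commutativity of the square follows by a one-line comparison. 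Without this universal-element step (or an equivalent argument that $\phi$ extends to all $\overline U$-modules), your weight-component comparison does not get off the ground.
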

\begin{proof}  It is enough to prove $a)$ and $b)$ when $\P = T_E$ and $\P' = T_{E'}$ because then it follows that $a)$ also holds for any direct summand in $T_E$ by taking $\phi: T_E \to T_E$ to be an orthogonal projection onto this summand. Then the general case for $b)$ follows as well.

\medskip \noindent We prove a) for $\P = T_E$. Since $E$ is finite dimensional we have
$\overline{E \otimes M} = E \otimes \overline{M}.$ Therefore
$$\overline{\Gamma}_f(E \otimes M) = \Gamma_f(\overline{E \otimes
M}) =\Gamma_f(E \otimes \overline{M}).$$

By Lemma \ref{Gabberlemma} b) we conclude that $E
\otimes \Gamma_f(\overline{M}) \subseteq \Gamma_f(E \otimes
\overline{M})$. This inclusion is denoted $\Theta_{T_E}(M)$. In order to prove that it is an
isomorphism, we proceed as follows: Let ${e}_1, \ldots {e}_k$ be a basis
for $E$ where ${e}_i$ is a weight vector of weight $\lambda_i$ ordered in such a way that
$\lambda_i < \lambda_j$ implies $i < j$. Let $\sum^k_{i=1} {e}_i
\otimes m_i \in \Gamma_f(E \otimes \overline{M})$. We must prove that each $m_i \in \Gamma_f(\ov{M})$. Pick $n = n_1 >0$ such 
that for all $\alpha \in \Bb$ we have
$$
0 = (E_\alpha-f(E_\alpha))^{n}(\sum^k_{i=1} {e}_i \otimes m_i) =
\sum^k_{i=1} \sum^n_{j=0} \binom{n}{j}E^j_\alpha {e}_i \otimes
(E_\alpha-f(E_\alpha))^{n-j}m_i.
$$
Note that $\operatorname{Span}_\C\{e_2, \ldots , e_k\}$ is a $U(\n)$-submodule of $V$. Therefore
it follows that $e_1 \otimes (E_\alpha-f(E_\alpha))^{n}m_1= 0$ and hence that
$(E_\alpha-f(E_\alpha))^{n}m_1= 0$.

Note that $E^{k}_\alpha {e}_i = 0$ for all $i$.
Let $n_2 = n+k$. Then $(E_\alpha-f(E_\alpha))^{n_2}(e_1 \otimes m_1)=0$ and therefore
$$(E_\alpha-f(E_\alpha))^{n_2}(\sum^k_{i=2} {e}_i \otimes m_i) = 0.
$$
By repeating the above argument we conclude that  $(E_\alpha-f(E_\alpha))^{n_2} m_2 =0$.
Proceeding by induction we conclude that $m_i \in
\Gamma_f(\overline{M})$ for all $i$.  

\medskip

\noindent
We now prove $b)$ for $\P = T_E$ and $\P' = T_{E'}$. Let $M \in \O$ and $\phi_M: T_E(M) \to T_{E'}(M)$ be the morphism given by $\phi: T_E \to T_{E'}$. We must show that the following diagram commutes:

\[ \xymatrix{ T_E\oGf (M)\ar[rr]_{\phi_{\oGf (M)}}   \ar[d]_{\Theta_{T_E}} &  & T_{E'}\oGf (M)\ar[d]^{\Theta_{T_{E'}}} \\  \oGf T_E(M) \ar[rr]^{\oGf {\phi_{M}}}   & &  \oGf T_{E'}(M)  }  \]

Let ${e}_1, \ldots {e}_k$ and ${e}'_1,\ldots,  {e}'_l$ be bases of  $E$ and $E'$. Let $\ov{U}= U/\mathfrak{a}$ where $\mathfrak{a}\subset Z$ is an ideal of finite codimension such that $\mathfrak{a} M = \mathfrak{a}T_E(M) = 0$. Then automatically $\mathfrak{a} \oGf(M)=0$.
Pick  $u_{ij} \in \ov{U}$ such that $\phi_{\ov{U}}({e}_i \otimes 1) = \sum {e}'_j \otimes u_{ij}$. Then $\phi_A({e}_i \otimes a) = \sum {e}'_j \otimes u_{ij} a$ for any $A \in \ov{U}$-$\mod$, by functoriality. Hence we get for ${e}_i \ot m \in T_E\oGf (M)$ that
$$\oGf {\phi_{M}}(\Theta_{T_E}({e}_i \ot m)) =  \oGf {\phi_{M}}({e}_i \ot m) =  
$$
$$\sum {e}'_j \ot u_{ij}m =\phi_{\oGf (M)}({e}_i \ot m) = \Theta_{T_{E'}}(\phi_{\oGf (M)}({e}_i \ot m)).  
$$
\end{proof}
A weaker version of a) above was proved in \cite{MR3994467} Proposition 2.3.4. He showed that $\oGf(T_E(\D_\lambda)) \cong T_E(\oGf(\D_\lambda))$.

\subsubsection{}

Let $s$ be a simple reflection and assume that $W_\mu = \{e,s\}$. Then there is the wallcrossing functor $\Theta_s := \Theta^\lambda_\mu \Theta^\mu_\lambda: \O_\hlambda \to \O_\hlambda$.
Let $\Psi_s$ denote the complex of functors $Id \to \Theta_s$ (given by the adjunction morphism). Then if $w=s_1 \cdots s_m$ is a reduced expression of an element in $W$ we get the Rouqier complex functor
$\Psi_w = \Psi_{s_1} \circ \cdots \circ \Psi_{s_m}$, see \cite{LW}.
\begin{cor} \label{rouquierwhittaker} Let $x \in W$ and let $s$ be a simple reflection such that $x \prec xs$. Then $\Theta_s(\D_{x \cdot \lambda}(f)) = \Theta_s (\D_{xs \cdot \lambda}(f))$ and the adjunction morphisms 
$Id \to \Theta_s$ and $\Theta_s \to Id$ define a short exact sequence $$0 \to \D_{x\cdot \lambda}(f) \to \Theta_s  (\D_{x \cdot \lambda}(f)) \to   \D_{xs \cdot \lambda}(f) \to 0.$$
Hence the Rouquier complex $\Psi_w(\Delta_{x \cdot \lambda}(f))$ is an exact resolution of its $0$'th cohomology for any $x \in W$.
\end{cor}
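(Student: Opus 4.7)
The plan is to transfer the three claims from $\O_{\hlambda}$ to $\N^f_{\hlambda}$ by applying the exact Whittaker functor $\oGf$ and invoking Propositions~\ref{mythesis} and~\ref{commutingprop}.

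Recall the classical fact in $\O_{\hlambda}$ (see e.g.~\cite{H}): if $x \prec xs$, then $\Theta_s(\D_{x \cdot \lambda}) \cong \Theta_s(\D_{xs \cdot \lambda})$, and the unit and counit of the adjunction $(\Theta^{\mu_s}_\lambda, \Theta^\lambda_{\mu_s})$ defining $\Theta_s$ (with $\mu_s$ a dominant integral weight satisfying $W_{\mu_s}=\{e,s\}$) assemble into the short exact sequence
$$
0 \to \D_{x \cdot \lambda} \to \Theta_s(\D_{x \cdot \lambda}) \to \D_{xs \cdot \lambda} \to 0.
$$
Now apply $\oGf$. Exactness preserves short exactness; Proposition~\ref{mythesis} identifies the outer terms with $\D_{x\cdot\lambda}(f)$ and $\D_{xs\cdot\lambda}(f)$; and Proposition~\ref{commutingprop}(a) yields $\oGf\,\Theta_s(\D_{x\cdot\lambda}) \cong \Theta_s(\D_{x\cdot\lambda}(f))$ for the middle term. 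Proposition~\ref{commutingprop}(b), applied to the unit $\id \to \Theta_s$ and counit $\Theta_s \to \id$ (both morphisms of projective functors, coming from adjunction), then guarantees that the transported maps are again the corresponding adjunction morphisms in $\N^f_{\hlambda}$. This proves the displayed short exact sequence; the isomorphism $\Theta_s(\D_{xs\cdot\lambda}(f)) \cong \Theta_s(\D_{x\cdot\lambda}(f))$ follows from Proposition~\ref{commutingprop}(a) applied to the analogous isomorphism in $\O_{\hlambda}$.

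For the Rouquier complex, the classical acyclicity of $\Psi_w(\D_{x\cdot\lambda})$ in $\O_{\hlambda}$, proved by induction on $\ell(w)$ from the displayed short exact sequence (cf.~\cite{LW}), transfers directly. Indeed, applying $\oGf$ term-by-term yields a complex in $\N^f_{\hlambda}$ which, by iterated use of Proposition~\ref{commutingprop}(b), is naturally isomorphic to $\Psi_w(\D_{x\cdot\lambda}(f))$, since every differential in $\Psi_w$ is built from adjunction natural transformations between compositions of wall-crossing functors. Exactness of $\oGf$ then preserves acyclicity outside a single cohomological degree. The only point that needs verification is compatibility of the isomorphisms $\Theta_\P$ of Proposition~\ref{commutingprop}(a) with composition of projective functors, and this is transparent from the explicit description $\Theta_{T_E}(M): E \otimes \Gamma_f(\ov{M}) \hookrightarrow \Gamma_f(E \otimes \ov{M})$ used in the proof of that proposition, so this is the only technicality and is straightforward rather than the main obstacle.
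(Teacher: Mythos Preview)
Your proposal is correct and follows essentially the same approach as the paper: start from the well-known short exact sequence $0 \to \D_{x\cdot\lambda} \to \Theta_s(\D_{x\cdot\lambda}) \to \D_{xs\cdot\lambda} \to 0$ in $\O_{\hlambda}$, apply the exact functor $\oGf$, and invoke Propositions~\ref{mythesis} and~\ref{commutingprop} to identify terms and maps. You have spelled out more carefully than the paper why part~(b) of Proposition~\ref{commutingprop} is needed (to ensure the transported maps are again the adjunction morphisms) and why compatibility of the $\Theta_\P$ with composition is required for the Rouquier complex statement, whereas the paper simply asserts that the latter ``follows formally''; but the underlying argument is the same.
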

\begin{proof} It is well-known that the sequence $0 \to \D_{x\cdot \lambda} \to \Theta_s  (\D_{x \cdot \lambda}) \to   \D_{xs \cdot \lambda} \to 0$ is exact. Applying the functor $\oGf$ we get from Propositions \ref{mythesis} 
and \ref{commutingprop} the first two statements of the lemma. This implies formally that the above Whittaker Rouquier complex is exact as well.
\end{proof}

\subsection{The functor $\tau:\O_{\hlambda} \to   \N^f_{\hlambda}$}
Recall the equivalences $$\kappa: \O_{\hlambda} \overset{F_{{\lambda}}}{\longrightarrow} {}_{\hlambda}\H_{{\lambda}} \overset{s}{\longrightarrow} {}_{{\lambda}}\H_{\hlambda}  \overset{G_{{\lambda}}}{\longrightarrow}
\O'_{\lambda} \hbox{ and }\sigma:  \O'_{\hmu}
\overset{F_{\mu}}{\longrightarrow} {}_{\hmu}\H_{\hlambda}
\overset{s}{\longrightarrow} {}_{\hlambda}\H_{\hmu}
\overset{G_f}{\longrightarrow} \N^f_{\hlambda} .$$

Let $\tau$ denote the composition 
\begin{equation}\label{deftau}
\tau = \sigma \circ \Theta^\mu_\lambda \circ  \kappa: \O_{\hlambda}  \overset{\kappa}{\longrightarrow} \O'_{\lambda}
\overset{\Theta^\mu_{\lambda}}{\longrightarrow} \O'_{\hmu} \overset{\sigma}{\longrightarrow}  \N^f_{\hlambda}. 
\end{equation}


It is evident that $\tau$ is exact. We shall show in Theorem \ref{equivalentTheorem} that $\tau$ is equivalent to $\oGf$. Since $\kappa$ and $\sigma$ are equivalences $\tau$ is in reality
determined by the projective functor $\Theta^\mu_{\lambda}: \O'_{\lambda} \to \O'_{\hmu}$.  We shall see in Lemma \ref{commutelemmaB} below
that the functor $\tau$ commutes with projective functors (and with morphisms of projective functors). The functor $\Theta^\mu_{\lambda}: \O'_{\lambda} \to \O'_{\hmu}$ obviously doesn't commute with projective functors, but after conjugating it with the equivalences $\sigma$ and $\kappa$ it does. The reason for this is essentially that the left and right action of $U$ on a Harish-Chandra bimodule commutes.

\medskip

\noindent
The left adjoint of the inclusion $i: \O'_{\lambda} \to \O'_{\hlambda}$ is $( \ ) \otimes_U U_\lambda$ and its right adjoint is $( \ )^{J_{\lambda}}$, the functor of taking $J_{\lambda}$-invariants. 
Therefore $\tau$ has the left adjoint 
\begin{equation}\label{tauL}
\tau^L =  \kappa^{-1} \circ  \Theta^\lambda_{\mu}(  \ ) \otimes_U U_\lambda \circ \sigma^{-1}.
\end{equation}
and the right adjoint
\begin{equation}\label{tauR}
\tau^R =  \kappa^{-1} \circ  \Theta^\lambda_{\mu}(  \ )^{J_{\lambda}} \circ \sigma^{-1}.
\end{equation}
\subsubsection{}
Expanding the maps in \eqref{deftau} we deduce a canonical equivalence of functors
\begin{equation}\label{deftau2}
\tau \cong G_f \circ (\Theta^\mu_\lambda)^r \circ F_\lambda:  \O_{\hlambda}  \overset{F_\lambda}{\longrightarrow} {}_{\hlambda}\H_{{\lambda}}  
\overset{ (\Theta^\mu_\lambda)^r }{\longrightarrow}  {}_{\hlambda}\H_{\hmu} \overset{G_f}{\longrightarrow} \N^f_{\hlambda}.
\end{equation}
This formula is easier to work with.

\begin{lemma}\label{commutelemmaB} 
{\bf a)} For any projective functor $\P: \M_\hlambda  \to \M_\hlambda$ there is a natural isomorphism $\eta_\P:  \P \circ \tau \to   \tau \circ \P $ between functors from $\O_\hlambda$ to $\N^f_\hlambda$.
{\bf b)} For any morphism $\phi: \P \to \P'$ of projective functors we have a commutative diagram
\[ \xymatrix{ \P \tau \ar[rr]^{\phi_{\tau}} \ar[d]_{\eta_\P} &  & \P' \tau \ar[d]^{\eta_{\P'}} \\ \tau\P \ar[rr]_{\tau} & & \tau \P'  }  \]

\end{lemma}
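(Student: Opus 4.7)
The plan is to exploit the alternative expression
$\tau \cong G_f \circ (\Theta^\mu_\lambda)^r \circ F_\lambda$
from \eqref{deftau2} and to establish commutation with projective functors stage by stage along this composition. As in the proof of Proposition \ref{commutingprop}, it suffices to handle the case $\P = T_E$ for $E \in \F$, because any projective functor is a direct summand of such a tensor functor and an idempotent splitting the summand then splits the resulting isomorphism as well.

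First, for the step $F_\lambda: \O_\hlambda \to {}_{\hlambda}\H_\lambda$, Lemma \ref{commutelemmaA} provides a natural isomorphism $F_\lambda(E \otimes M) \cong E^l \otimes F_\lambda(M)$, i.e.\ $F_\lambda \circ T_E \cong T_{E^l} \circ F_\lambda$, where $T_{E^l}(X) := E^l \otimes X$ is tensoring with $E$ on the left (with the canonical diagonal bimodule structure on $\H$). Next, $(\Theta^\mu_\lambda)^r$ is a projective functor acting only on the right $U$-module structure of a Harish-Chandra bimodule. Since the left and right $U$-actions on any bimodule commute, and since the coproduct decomposition defining the bimodule structure on $E^l \otimes X$ acts trivially from the right on the $E^l$-factor, one gets an evident natural isomorphism $(\Theta^\mu_\lambda)^r(E^l \otimes X) \cong E^l \otimes (\Theta^\mu_\lambda)^r(X)$. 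Finally, for $G_f(X) = \varprojlim_n X \otimes_U \D^n_\mu(f)$, tensoring on the left with $E^l$ and tensoring over $U$ on the right with $\D^n_\mu(f)$ commute by the associativity of tensor product, giving a natural isomorphism $(E^l \otimes X) \otimes_U \D^n_\mu(f) \cong E \otimes (X \otimes_U \D^n_\mu(f))$; because $E$ is finite dimensional, this iso also commutes with the inverse limit, yielding $G_f \circ T_{E^l} \cong T_E \circ G_f$. Composing the three natural isomorphisms produces the desired $\eta_{T_E}: T_E \circ \tau \isoto \tau \circ T_E$.

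For part (b), each of the three natural isomorphisms just described is natural in $E$: a $U$-module map $\psi: E \to E'$ induces natural transformations $T_\psi: T_E \to T_{E'}$ and $T_{\psi^l}: T_{E^l} \to T_{E'^l}$, and both commute with the isos in the three steps by inspection of the formulas. By the Bernstein-Gelfand classification of natural transformations between projective functors (as recalled in Section \ref{Projective functors Section}), any $\phi : T_E \to T_{E'}$ is determined by $\phi_{\ov U}$ and can be produced from the coproduct-induced morphisms of the kind above, so the three commutative squares paste into the required commutative diagram relating $\phi \circ \tau$ and $\tau \circ \phi$. Passing from $T_E$ to an arbitrary projective functor $\P$ via orthogonal idempotents completes the argument for general $\phi : \P \to \P'$.

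The main technical obstacle is the third step: one must verify that, under the canonical isomorphism $(E^l \otimes X) \otimes_U \D^n_\mu(f) \cong E \otimes (X \otimes_U \D^n_\mu(f))$, the left $U$-action coming from the diagonal coproduct action on $E^l \otimes X$ matches the diagonal coproduct action of $U$ on $E \otimes G_f(X)$. This amounts to a direct calculation with $u \cdot ((e \otimes x) \otimes d) = (\sum u_{(1)} e \otimes u_{(2)} x) \otimes d$ mapping to $\sum u_{(1)} e \otimes (u_{(2)} x \otimes d)$, which is exactly the diagonal action on the target; after this is checked, compatibility with the transition maps in the projective system, and commutativity of $E \otimes -$ with $\varprojlim_n$ for finite-dimensional $E$, are routine.
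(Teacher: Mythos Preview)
Your construction of $\eta_\P$ in part (a) is exactly the paper's: use \eqref{deftau2}, reduce to $\P=T_E$, and compose the three evident commutation isomorphisms for $F_\lambda$, $(\Theta^\mu_\lambda)^r$, and $G_f$. No issue there.

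Part (b), however, has a genuine gap. You argue that the three stagewise isomorphisms are ``natural in $E$'', meaning they intertwine $T_\psi$ and $T_{\psi^l}$ for a $U$-module map $\psi:E\to E'$. That is true but insufficient: a general natural transformation $\phi:T_E\to T_{E'}$ between projective functors on $\M_\hlambda$ is \emph{not} of the form $T_\psi$. Already for $E=E'=\C$ one has $T_E=\operatorname{Id}$, and the natural endomorphisms of $\operatorname{Id}$ on $\M_\hlambda$ include multiplication by arbitrary elements of $Z/J_\lambda^N$, whereas $\Hom_U(\C,\C)=\C$. Your appeal to Bernstein--Gelfand does not bridge this: that result says $\phi$ is determined by $\phi_{\D_\lambda}$ (or $\phi_{\ov U}$), not that $\phi$ is generated by maps of the form $T_\psi$. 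The phrase ``can be produced from the coproduct-induced morphisms of the kind above'' is doing no work.

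The paper closes this gap by writing the general $\phi$ explicitly: choose bases $\{e_i\}$, $\{e'_j\}$ and elements $u_{ij}\in \ov U=U/J_\lambda^N$ with $\phi_{\ov U}(e_i\otimes 1)=\sum_j e'_j\otimes u_{ij}$, so that $\phi_A(e_i\otimes a)=\sum_j e'_j\otimes u_{ij}a$ for every $\ov U$-module $A$. One then tracks an element $e_i\otimes((v^r\otimes\psi)\otimes_U\ov 1)\in\P\tau(M)$ through both paths of the square, using the explicit formula \eqref{etaPacts} for $\eta_\P$; the point is that the $u_{ij}$ act on the left of the Harish-Chandra bimodule and hence commute past both $(\Theta^\mu_\lambda)^r$ and $\otimes_U\D^n_\mu(f)$. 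This computation is short, but it is not replaced by mere functoriality in $E$.
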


\begin{proof}  By similar arguments as were given in the beginning of the proof of Proposition \ref{commutingprop} we may assume that $\P = pr_\lambda T_E i_\lambda$ and $\P' = pr_{\lambda}T_{E'} i_\lambda$.
Let $V$ be a finite dimensional irreducible representation with extremal weight $\mu -\lambda$ so that $\Theta^\mu_\lambda = pr_\mu T_{V}$.  We use \eqref{deftau2} as the definition of $\tau$.

{\bf a)} Let $M \in \O_{\hlambda}$.   Let $\eta_\P = \eta_\P(M)$ denote the composition of the natural isomorphisms:
$$
\P\tau(M) =  \P G_f \circ (\Theta^\mu_\lambda)^r \circ F_\lambda(M) \cong  G_f \circ \P^l  \circ  (\Theta^\mu_\lambda)^r \circ F_\lambda(M) \cong
$$
$$
G_f \circ (\Theta^\mu_\lambda)^r \circ \P^l \circ F_\lambda(M) \cong G_f \circ (\Theta^\mu_\lambda)^r \circ F_\lambda(\P M) = \tau \P (M).
$$
Let us describe $\eta_\P$ explicitly.  Let $X \in \O_\hlambda$ and $Y \in  {}_{\hlambda}\H_{\hmu}$.  An element in $\P X$ is a (linear combination of elements of the form) $e \otimes x$, $e \in E$ and $x \in X$
and an element in $F_\lambda X$ can be represented by a $\C$-linear function $\psi: \D^n \to X$ (for $n$ sufficiently large). 
An element in $(\Theta^\mu_\lambda)^r Y \; ( \subseteq V^r \otimes Y)$
can be written as $v^r \otimes y$ for $v \in V$, $y \in Y$, and 
an element in
$G_f(Y)$ can be represented by $y \otimes_U \ov{1} \in Y \otimes_U \D^n_\mu(f)$ (for $n$ sufficiently large). Here $\ov{1} \in \D^n_\mu(f)$ is a generator.

Therefore an element in $\P\tau(M)$ can be written as (a linear combination of elements)
$$e \otimes ((v^r \otimes \psi) \otimes_U \ov{1}),$$ 
where $\psi \in \Hom_\C( \D^n \to M)$, $e \in E$ and $v \in V$.  We then have 
\begin{equation}\label{etaPacts}
\eta_\P(e \otimes ((v^r \otimes \psi) \otimes_U \ov{1})) = (v^r \otimes (e^l \otimes \psi)) \otimes_U \ov{1},
\end{equation}
where $e^l \otimes \psi \in \Hom(\D_\lambda, E \otimes M)$ denotes the function $d \mapsto e \otimes \psi(d)$.

\medskip 

\noindent   \textbf{b)}  Let $M \in \O_\hlambda$. We want to establish the commutativity of the diagram
\begin{equation}\label{commutativesquare}
\xymatrix{ \P \tau(M) \ar[rr]^{\phi_{\tau(M)}} \ar[d]_{\eta_{\P}} &  & \P' \tau(M) \ar[d]^{\eta_{\P'}} \\ \tau\P (M)\ar[rr]_{\tau(\phi_{M})} & & \tau\P'(M)  } 
\end{equation}
Let $N >0$ be such that $J^N_\lambda$ annihilates $M$, $\tau(M)$ and $\P(M)$ and define $\ov{U} = U/J^N_\lambda$.

Let ${e}_1, \ldots {e}_k$ and $e'_1,\ldots,  e'_l$ be bases of 
 $E$ and $E'$, respectively. 
Then $\phi_{\ov{U}} : \P \ov{U} \to \P' \ov{U}$ is given by $\phi_{\ov{U}}({e}_i \otimes 1) = \sum_j e'_j\otimes u_{ij}$ for some $u_{ij}\in \ov{U}$. 
Therefore we get $\phi_{A} : \P A \to \P' A$ is given by $\phi_A({e}_i\otimes a ) = \sum e'_j \otimes u_{ij}a$, for $A \in \ov{U}$-$\mod$ by functoriality.\\
We now calculate
$$
\tau (\phi_M) \circ \eta_\P(e_i \otimes ((v^r \otimes \psi) \otimes_U \ov{1})) = \tau (\phi_M) ((v^r \otimes (e^l_i \otimes \psi)) \otimes_U \ov{1}) = $$
$$
\sum_j (v^r \otimes (e'^l_j \otimes u_{ij} \psi)) \otimes_U \ov{1}.
$$
On the other hand we have
$$
\eta_{\P'} \circ \phi_{\tau(M)}(e_i \otimes ((v^r \otimes \psi) \otimes_U \ov{1})) = \sum_j \eta_{\P'} (e'_j \otimes u_{ij}((v^r \otimes \psi) \otimes_U \ov{1})) =
$$
$$
 \sum_j \eta_{\P'} (e'_j \otimes ((v^r \otimes  u_{ij}\psi) \otimes_U \ov{1})) =  \sum_j (v^r \otimes (e'^l_j \otimes u_{ij} \psi)) \otimes_U \ov{1}.
$$
\end{proof}

\begin{theorem}\label{equivalentTheorem}  $\tau$ is naturally equivalent to $\overline{\Gamma}_f$.
\end{theorem}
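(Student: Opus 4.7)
The plan is to build a natural isomorphism $\Phi: \oGf \isoto \tau$ starting from $\D_\lambda$ and propagating outward using the commutativity with projective functors established in Proposition \ref{commutingprop} and Lemma \ref{commutelemmaB}. Since both $\oGf$ and $\tau$ are exact, once $\Phi$ is defined on projectives it extends uniquely to all of $\O_\hlambda$.

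First I would verify that $\oGf(\D_\lambda) \cong \tau(\D_\lambda)$. Proposition \ref{mythesis} gives $\oGf(\D_\lambda) \cong \D_\lambda(f)$, while unwinding the definition of $\tau$ together with \eqref{kappa1} and \eqref{sigma1} yields $\tau(\D_\lambda) = \sigma \Theta^\mu_\lambda \kappa(\D_\lambda) = \sigma(\D_\mu) \cong \D_\lambda(f)$. Fix once and for all an isomorphism $\Phi_{\D_\lambda}: \oGf(\D_\lambda) \isoto \tau(\D_\lambda)$.

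Next I extend $\Phi$ to projective objects. Any projective $P \in \O_\hlambda$ has the form $P = \P(\D_\lambda)$ for some projective endofunctor $\P$ of $\M_\hlambda$ (Section \ref{transOnO}); set
$$
\Phi_P := \eta_\P(\D_\lambda) \circ \P(\Phi_{\D_\lambda}) \circ \Theta_\P(\D_\lambda)^{-1}: \oGf(P) \isoto \tau(P).
$$
To check naturality, let $g: \P(\D_\lambda) \to \P'(\D_\lambda)$ be any morphism between projectives; by \cite{BG} it lifts to a natural transformation $\phi: \P \to \P'$ with $\phi_{\D_\lambda} = g$. The square with vertical arrows $\Phi_P, \Phi_{P'}$ and horizontal arrows $\oGf(g), \tau(g)$ decomposes into three subsquares stacked vertically: the top commutes by Proposition \ref{commutingprop}(b), the middle commutes by naturality of $\phi$ evaluated at the morphism $\Phi_{\D_\lambda}: \oGf(\D_\lambda) \to \tau(\D_\lambda)$, and the bottom commutes by Lemma \ref{commutelemmaB}(b). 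Independence of the chosen factorization $P \cong \P(\D_\lambda)$ then follows from the same argument applied to the connecting isomorphism between two presentations.

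Finally, any $M \in \O_\hlambda$ has a projective presentation $P_1 \to P_0 \to M \to 0$; exactness of $\oGf$ and $\tau$ together with the natural isomorphism $\Phi$ on $P_1, P_0$ induces $\Phi_M: \oGf(M) \isoto \tau(M)$ by passing to cokernels, and naturality across all of $\O_\hlambda$ follows from naturality on projectives. The main obstacle is the middle step: verifying that the assembled $\Phi_P$ is independent of the chosen projective functor $\P$ representing $P$ and is compatible with every morphism between projectives (not merely with those arising from a priori chosen natural transformations of functors). The commutativity squares in parts (b) of Proposition \ref{commutingprop} and Lemma \ref{commutelemmaB}, combined with the Bernstein-Gelfand principle that morphisms $\P(\D_\lambda) \to \P'(\D_\lambda)$ always lift to natural transformations $\P \to \P'$, are precisely tailored to overcome this difficulty.
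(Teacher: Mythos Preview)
Your proposal is correct and follows essentially the same route as the paper's proof: fix an isomorphism at $\D_\lambda$, propagate it to $\P(\D_\lambda)$ via $\eta_\P \circ \P(\epsilon) \circ \Theta_\P^{-1}$, and verify naturality by stacking the three commutative squares coming from Proposition~\ref{commutingprop}(b), the naturality of $\phi$, and Lemma~\ref{commutelemmaB}(b). The only cosmetic difference is in the last step: the paper passes through $K^b(\Ac) \cong D^b(\O_\hlambda)$ and restricts to the heart, whereas you extend from projectives via presentations and cokernels; these are interchangeable standard devices.
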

\begin{proof} Let $Proj(\O_\hlambda)$ denote the full subcategory of projective objects in $\O_\hlambda$. 
Let $\Ac$ be the full subcategory of $\O_\hlambda$ whose objects are of the form $\P(\D_\lambda)$, where $\P: \M_{\hlambda} \to \M_{\hlambda}$ is a projective functor. 
Then the inclusion $\Ac \to Proj(\O_\hlambda)$ is an equivalence of categories. Let $P = \P(\D_\lambda), P' = \P'(\D_\lambda) \in \A$ and let 
$g: P \to P'$ be a morphism. Then according to \cite{BG} there is a natural transformation $\phi : \P \to \P'$ (not necessarily unique) such that $\phi_{\D_\lambda} = g$. 
By Proposition \ref{mythesis} we have $\oGf(\D_\lambda) \cong \D_\lambda(f)$ and using \eqref{kappa1}, \eqref{sigma1} and $\Theta^\mu_\lambda(\D_\lambda) \cong \D_{\mu}$ we also obtain $\tau (\D_\lambda) \cong \D_\lambda(f).$
Fix an isomorphism  
$$\epsilon :  \oGf (\D_\lambda) \isoto \tau( \D_\lambda).$$ 
Consider the diagram
\[\xymatrixcolsep{5pc}
\xymatrix{
\oGf \P({\D_{\lambda}}) \ar[r]^{\overline{\Gamma}_f(\phi_{{\D_{\lambda}}})} \ar[d]_{\Theta^{-1}_{\P}}& \overline{\Gamma}_f \P'({\D_{\lambda}})  \ar[d]^{\Theta^{-1}_{\P'}}\\
\P \overline{\Gamma}_f ({\D_{\lambda}})  \ar[r]^{\phi_{\overline{\Gamma}_f{\D_{\lambda}}}}  \ar[d]_{\P(\epsilon)}& \P' \overline{\Gamma}_f ({\D_{\lambda}}) \ar[d]^{\P'(\epsilon)} \\
\P \tau ({\D_{\lambda}})  \ar[r]^{\phi_{\tau{\D_{\lambda}}}}   \ar[d]_{\eta_{\P}} & \P' \tau ({\D_{\lambda}})  \ar[d]^{\eta_{\P'}} \\
\tau\P ({\D_{\lambda}}) \ar[r]^{\tau(\phi_{{\D_{\lambda}}})}& \tau\P' ({\D_{\lambda}})}
\]
The middle square is obviously commutative and by Proposition \ref{commutingprop} and Lemma \ref{commutelemmaB} the top and the bottom squares are commutative as well.
Thus the outer square is commutative and therefore we can define 
the natural transformation $\psi: \oGf |_\Ac \to \tau |_\Ac$ by $$\psi_P = \eta_\P \circ \P(\epsilon) \circ \Theta^{-1}_\P(\Delta_\lambda): \oGf( P) \to \tau(P).$$  
Evidently $\psi$ is an equivalence and it induces an equivalence between the induced functors
$$
\oGf, \tau: D^b(\O_\hlambda) \cong K^b(Proj(\O_{\hlambda})) \cong K^b(\Ac) \to D^b(\N^f_{\hlambda}).
$$
Since $\oGf$ and $\tau$ are exact and $\O_\hlambda$ is the heart of the standard $t$-structure on $D^b(\O_\hlambda)$ the last equivalence restricts to an equivalence between the original functors
$\oGf, \tau : \O_\hlambda \to \N^f_{\hlambda}$.
\end{proof}
\begin{remark}  As a special case we see that Soergel's functor $\V$ is equivalent to $\sigma \Theta^{-\rho}_\lambda \kappa$. This follows from Theorem \ref{equivalentTheorem} and Proposition \ref{aprop}.
\end{remark}

\begin{cor}\label{maincor} The functor $\oGf: \O_\hlambda \to \N^f_{\hlambda}$ has a left adjoint $\oGf^L$ and a right adjoint $\oGf^R$. The restricted functor
$\oGf: \O_\hlambda \to \oGf(\O_\hlambda)$ is a quotient functor, i.e. the adjunction map $V \mapsto \oGf \oGf^L V$ is an isomorphism for $V \in \oGf(\O_\hlambda)$.
\end{cor}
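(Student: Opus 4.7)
The plan is as follows. For the existence of adjoints I would invoke Theorem~\ref{equivalentTheorem} together with the formulas \eqref{tauL} and \eqref{tauR}: since $\sigma$ and $\kappa$ are equivalences, the projective functor $\Theta^\mu_\lambda\colon\O'_{\hlambda}\to\O'_{\hmu}$ has two-sided adjoint $\Theta^\lambda_\mu$, and the inclusion $i\colon\O'_\lambda\hookrightarrow\O'_{\hlambda}$ has left adjoint $L'=(\ )\otimes_U U_\lambda$ and right adjoint $R'=(\ )^{J_\lambda}$, composing these adjunctions yields adjoints $\tau^L,\tau^R$ of $\tau$, and transport along the equivalence $\oGf\cong\tau$ provides $\oGf^L,\oGf^R$.

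For the quotient statement I would first observe that the triangle identity $\oGf(\epsilon_M)\circ\eta_{\oGf M}=\Id_{\oGf M}$ makes $\eta_{\oGf M}\colon\oGf M\to\oGf\,\oGf^L\,\oGf M$ a split monomorphism, so it is an isomorphism if and only if $\oGf(\epsilon_M)$ is, where $\epsilon\colon\oGf^L\oGf\to\Id$ denotes the counit. Next I would exploit that both $\oGf$ and $\oGf^L$ commute with projective functors and with natural transformations between them: for $\oGf$ this is Proposition~\ref{commutingprop}, while for $\oGf^L$ it follows formally by taking left adjoints of the isomorphisms $\P\circ\oGf\cong\oGf\circ\P$ (using that projective functors are self-adjoint). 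Combined with naturality of $\epsilon$, this identifies $\oGf(\epsilon_{\P(\D_\lambda)})$ with $\P(\oGf(\epsilon_{\D_\lambda}))$ up to the commutation isomorphisms. Since every projective in $\O_{\hlambda}$ has the form $\P(\D_\lambda)$, the iso-property on projectives reduces to the single case $M=\D_\lambda$; a projective presentation $P_1\to P_0\to M\to 0$ together with the five lemma (using exactness of $\oGf$ and right exactness of $\oGf\,\oGf^L$) then propagates the isomorphism to every $M\in\O_{\hlambda}$.

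The final task is to verify that $\oGf(\epsilon_{\D_\lambda})$ is an isomorphism. Unfolding via $\tau\cong\oGf$, equations \eqref{kappa1} and \eqref{sigma1}, and Proposition~\ref{mythesis}, the statement becomes that the map
$$\Theta^\mu_\lambda L'\Theta^\lambda_\mu\D_\mu\;\longrightarrow\;\Theta^\mu_\lambda\D_\lambda=\D_\mu$$
induced by the counit of $L'\Theta^\lambda_\mu\dashv\Theta^\mu_\lambda i$ at $\D_\lambda$ is an isomorphism. Here $\Theta^\lambda_\mu\D_\mu$ is the indecomposable projective in $\O'_{\hlambda}$ with top $L_{w_\mu\cdot\lambda}$ (where $w_\mu$ denotes the longest element of $W_\mu$). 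I would argue that the kernel and cokernel of the counit $L'\Theta^\lambda_\mu\D_\mu\to\D_\lambda$ are both built from simples $L_{x\cdot\lambda}$ with $x\notin W^\mu$, and hence lie in the Serre kernel of the exact functor $\Theta^\mu_\lambda\colon\O'_\lambda\to\O'_{\hmu}$; applying $\Theta^\mu_\lambda$ then kills both kernel and cokernel and produces the required isomorphism. I expect the main obstacle to be this composition-factor analysis of the counit, which requires comparing the projective covers of $L_{w_\mu\cdot\lambda}$ in $\O_\lambda$ versus $\O_{\hlambda}$ and invoking BGG reciprocity in each.
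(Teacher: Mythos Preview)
Your overall architecture agrees with the paper: transport $\tau^L,\tau^R$ along the equivalence of Theorem~\ref{equivalentTheorem} to obtain adjoints, reduce the quotient statement to the single object $\D_\lambda$ by commuting with projective functors, and then propagate via a projective presentation and the five lemma. Your use of the triangle identity to pass from the unit $\eta_{\oGf M}$ to $\oGf(\epsilon_M)$ is a clean dual reformulation of what the paper does (the paper works directly with the unit at $\D_\lambda(f)$ and reduces it, via $\sigma$, to the unit $\D_\mu\to\Theta^\mu_\lambda(\Theta^\lambda_\mu\D_\mu\otimes_U U_\lambda)$, which is exactly the retraction of your map by the same triangle identity).

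The genuine gap is the last paragraph. Showing that the \emph{cokernel} of the counit $L'\Theta^\lambda_\mu\D_\mu\to\D_\lambda$ is killed by $\Theta^\mu_\lambda$ is formal: by the triangle identity $\Theta^\mu_\lambda(\epsilon')$ is a split epimorphism, so its cokernel vanishes. But the \emph{kernel} claim is not formal, and it is equivalent to the paper's Lemma~\ref{adjointlem2}: killing the kernel by $\Theta^\mu_\lambda$ says precisely that the split epimorphism $\Theta^\mu_\lambda(P_{w_\mu\cdot\lambda}/J_\lambda P_{w_\mu\cdot\lambda})\to\D_\mu$ is an isomorphism. Your proposed method (``compare projective covers of $L_{w_\mu\cdot\lambda}$ in $\O_\lambda$ versus $\O_{\hlambda}$ and invoke BGG reciprocity in each'') does not obviously go through, because $\O_\lambda$ is not a highest-weight category in the relevant sense: Verma modules other than $\D_{w_0\cdot\lambda}$ do not lie in $\O_\lambda$, so there is no BGG reciprocity to invoke there, and knowing that $P_{w_\mu\cdot\lambda}/J_\lambda P_{w_\mu\cdot\lambda}$ is projective in $\O_\lambda$ gives no direct handle on its composition series. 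The paper's proof of Lemma~\ref{adjointlem2} is where the real work lies: it first shows $P_{w_\mu\cdot\lambda}$ is Verma-multiplicity-free (via BGG reciprocity in $\O_{\hlambda}$ and an adjunction computation), deduces $P^{J_\lambda}\cong\D_\lambda$, passes through a tilting-module argument to get $[P/J_\lambda P:\D_{w_0\cdot\lambda}]=1$, and finishes with a Verma-flag analysis of how $J_\lambda$ acts. You should expect to need an argument of comparable depth; the outline you give does not yet supply one.
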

\begin{proof}
The functors $\oGf^L$ and $\oGf^R$ are obtained by transporting the functors $\tau^L$ and $\tau^R$ from \eqref{tauL} and \eqref{tauR} by means of the equivalence $\oGf \cong \tau$.
Let $V = \oGf(M)$, $M \in \O_\hlambda$. We must prove that the adjunction $V \to \oGf \oGf^L V$ is an isomorphism. 

We first show that $\D_\lambda(f) \mapsto \oGf \oGf^L \D_\lambda(f)$ is an isomorphism. 
For this purpose it is obviously enough to show that the adjunction $\Delta_\lambda(f) \isoto \tau\tau^L(\Delta_\lambda(f))$ is an isomorphism.
Recall that $\tau$ equals the composition
$$\O_{\hlambda} \overset{\kappa}{\to}
\O'_{\lambda} 
\overset{\Theta^\mu_{\lambda}}{\longrightarrow} \O'_{\hmu}
\overset{\sigma}{\to} \N^f_{\hlambda}.$$
The left adjoint of  $\O'_{\lambda} 
\overset{\Theta^\mu_{\lambda}}{\longrightarrow} \O'_{\hmu}$ is $\Theta^\lambda_\mu(  \ ) \otimes_U U_\lambda$, see \eqref{tauL}. Since $\kappa$ and $\sigma$ are equivalences and $\sigma^{-1} \D_\lambda(f) \cong \D_\mu$
it suffices to show that
the adjunction map
$\D_\mu \mapsto \Theta^\mu_\lambda (\Theta^\lambda_\mu (\D_\mu) \otimes_U U_\lambda)$
is an isomorphism. This is proved in Lemma \ref{adjointlem2} below.

\medskip

Note that since $\oGf$ commutes with projective functors and morphisms between them also its left and right adjoints do. Thus, for any projective functor $\P$, the adjunction map $\P(\D_\lambda(f)) \to \oGf \oGf^L(\P(\D_\lambda(f))$ is equivalent
to the map $\P[\D_\lambda(f) \to \oGf \oGf^L(\D_\lambda(f)]$ which is an isomorphism by the above.

Now pick a projective resolution $\P'(\D_\lambda) \to \P(\D_\lambda) \to M \to 0$.  Applying $\oGf$ we get the exact sequence
$$
\P'(\D_\mu) \to \P(\D_\mu) \to V \to 0.
$$
Applying the adjunction morphisms vertically we get a commutative diagram
$$\xymatrix{
\P'(\D_\mu) \ar[d]^{\cong} \ar[r] &\P(\D_\mu)\ar[r] \ar[d]^{\cong}  & V \ar[r] \ar[d] &  0\\
\oGf \oGf^L \P'(\D_\mu)  \ar[r] &\oGf \oGf^L \P(\D_\mu)  \ar[r]& \oGf \oGf^L V \ar[r] & 0}
$$
Therefore $V \to \oGf \oGf^L V$ is an isomorphism.
\end{proof}
Recall that the left adjoint of the functor $\O'_{\lambda} \overset{\Theta^\mu_{\lambda}}{\longrightarrow} \O'_{\hmu}$ is $\Theta^\lambda_\mu(  \ ) \otimes_U U_\lambda$.  
\begin{lemma}\label{adjointlem2} The adjunction morphism $\pi: \D_\mu \mapsto \Theta^\mu_\lambda  (\Theta^\lambda_\mu (\D_\mu) \otimes_U U_\lambda)$ is an isomorphism.
\end{lemma}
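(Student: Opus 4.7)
The plan is to identify $\Theta^\lambda_\mu(\D_\mu)$ explicitly and analyze its $J_\lambda$-quotient via Soergel's functor.

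Since $\mu$ is dominant, $\D_\mu\cong P_\mu$ is the indecomposable projective cover of $L_\mu$ in $\O_\hmu$, so $\Theta^\lambda_\mu(\D_\mu)$ is projective and indecomposable in $\O'_\hlambda$ with $\End\cong\End(P_\mu)=\C$. For $x\in W$ the adjunction
$$\dim\Hom_{\O_\hlambda}\bigl(\Theta^\lambda_\mu P_\mu,\,L_{x\cdot\lambda}\bigr)=\dim\Hom_{\O_\hmu}\bigl(P_\mu,\,\Theta^\mu_\lambda L_{x\cdot\lambda}\bigr),$$
together with $\Theta^\mu_\lambda L_{x\cdot\lambda}=L_{x\cdot\mu}$ for $x\in W^\mu$ (and $0$ otherwise), $x\cdot\mu=\mu\iff x\in W_\mu$, and $W^\mu\cap W_\mu=\{w_0^\mu\}$ (the longest element of $W_\mu$), pins down the simple head of $\Theta^\lambda_\mu(\D_\mu)$ as $L_{w_0^\mu\cdot\lambda}$; hence $\Theta^\lambda_\mu(\D_\mu)\cong P_{w_0^\mu\cdot\lambda}$.

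Next, apply the exact functor $\Theta^\mu_\lambda$ to the short exact sequence
$$0\to J_\lambda P_{w_0^\mu\cdot\lambda}\to P_{w_0^\mu\cdot\lambda}\to P_{w_0^\mu\cdot\lambda}\otimes_U U_\lambda\to 0,$$
whose middle term becomes $\D_\mu^{\oplus|W_\mu|}$ via $\Theta^\mu_\lambda\Theta^\lambda_\mu\cong\Id^{|W_\mu|}_{\M_\hmu}$. The claim then reduces to $\Theta^\mu_\lambda(P_{w_0^\mu\cdot\lambda}\otimes_U U_\lambda)\cong\D_\mu$. To see this I apply Soergel's functor $\V:\O_\hlambda\to C\text{-}\mod$ with $C=Z/\Ann(P_{w_0\cdot\lambda})$: it is exact and intertwines the $J_\lambda$-action with the augmentation ideal $C_+\subset C$. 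Soergel's structure theorem combined with the translation compatibility $\V\circ\Theta^\lambda_\mu\cong C\otimes_{C^{W_\mu}}(-)$ yields $\V(P_{w_0^\mu\cdot\lambda})\cong C\otimes_{C^{W_\mu}}\C$, so that
$$\V(P_{w_0^\mu\cdot\lambda}\otimes_U U_\lambda)\cong\V(P_{w_0^\mu\cdot\lambda})/C_+\V(P_{w_0^\mu\cdot\lambda})\cong\C.$$
Using the compatibility $\V_\mu\circ\Theta^\mu_\lambda\cong(-)|_{C^{W_\mu}}\circ\V$ and $\V_\mu(\D_\mu)=\C$, together with faithfulness of the Soergel functor on the projective $\D_\mu=P_\mu$, this identifies $\Theta^\mu_\lambda(P_{w_0^\mu\cdot\lambda}\otimes_U U_\lambda)$ with $\D_\mu$.

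Finally, $\pi$ is the image of $\Id_{L\D_\mu}$ under the adjunction $\Hom(\D_\mu,\Theta^\mu_\lambda L\D_\mu)\cong\Hom(L\D_\mu,L\D_\mu)$, where $L=\Theta^\lambda_\mu(-)\otimes_U U_\lambda$ is left adjoint to $\Theta^\mu_\lambda$; hence $\pi\neq 0$, and since $\End(\D_\mu)=\C$ with both sides indecomposable copies of $\D_\mu$, $\pi$ must be an isomorphism. The main obstacle is the Soergel-module identification $\V(P_{w_0^\mu\cdot\lambda})\cong C\otimes_{C^{W_\mu}}\C$ and the tracking of its image under $\Theta^\mu_\lambda$, which rests on Soergel's structure theorem; alternatively, one transports the whole problem via $F_\lambda$ to Harish-Chandra bimodules, where the $J_\lambda$-quotient becomes the transparent bimodule operation $U_\lambda\otimes_U(-)$ and the computation reduces to a direct bimodule manipulation.
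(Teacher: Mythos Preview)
Your identification $\Theta^\lambda_\mu(\D_\mu)\cong P_{w_0^\mu\cdot\lambda}$ via the head computation is correct, though the parenthetical claim that $\End(\Theta^\lambda_\mu\D_\mu)\cong\C$ is false: by adjunction $\Hom(\Theta^\lambda_\mu\D_\mu,\Theta^\lambda_\mu\D_\mu)\cong\Hom(\D_\mu,\Theta^\mu_\lambda\Theta^\lambda_\mu\D_\mu)\cong\Hom(\D_\mu,\D_\mu^{|W_\mu|})$ is $|W_\mu|$-dimensional. This does not affect the identification, since your head argument already gives indecomposability.

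The substantive gap is the passage from $\V_\mu\bigl(\Theta^\mu_\lambda(P_{w_0^\mu\cdot\lambda}\otimes_U U_\lambda)\bigr)\cong\C$ to $\Theta^\mu_\lambda(P_{w_0^\mu\cdot\lambda}\otimes_U U_\lambda)\cong\D_\mu$. Soergel's functor $\V_\mu$ is fully faithful only on \emph{projectives}, and you have not shown that $\Theta^\mu_\lambda(P_{w_0^\mu\cdot\lambda}\otimes_U U_\lambda)$ is projective in $\O'_\hmu$. The quotient $P_{w_0^\mu\cdot\lambda}\otimes_U U_\lambda$ is projective in $\O_\lambda$, but $\Theta^\mu_\lambda:\O_\lambda\to\O'_\hmu$ need not preserve projectives: its left adjoint $\Theta^\lambda_\mu(-)\otimes_U U_\lambda$ is only right exact, and its exactness is essentially what the lemma is trying to establish. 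Knowing only $[\Theta^\mu_\lambda(P/J_\lambda P):L_{w_0\cdot\mu}]=1$ does not rule out, say, extra composition factors $L_{x\cdot\mu}$ with $x\neq w_0$; many subquotients of $\D_\mu^{|W_\mu|}$ satisfy this multiplicity condition without being $\D_\mu$. The alternative you sketch via Harish--Chandra bimodules is too vague to fill this gap.

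The paper's proof proceeds quite differently: after the same identification $\Theta^\lambda_\mu\D_\mu\cong P_{w\cdot\lambda}$, it first shows that $P_{w\cdot\lambda}$ is Verma-multiplicity-free, deduces $P^{J_\lambda}\cong\D_\lambda$, then by a duality/tilting argument obtains $[P/J_\lambda P:\D_{w_0\cdot\lambda}]=1$, and finally uses a direct analysis of the $J_\lambda$-action on a Verma flag of $P$ to force $\Theta^\mu_\lambda(J_\lambda P)$ to fill up all but one $\D_\mu$-layer of $\Theta^\mu_\lambda P$. This bypasses any appeal to faithfulness of $\V_\mu$ on non-projectives.
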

\begin{proof}  We shall prove that  $\Theta^\mu_\lambda(\Theta^\lambda_\mu (\D_\mu) \otimes_U U_\lambda) \cong \D_\mu$. From this it will follow that $\pi$ is an isomorphism because $\pi$ is nonzero
(as it corresponds to the natural surjection $\Theta^\lambda_\mu \D_\mu \mapsto  (\Theta^\lambda_\mu \D_\mu) \otimes_U U_\lambda$ under adjunction) and any nonzero endomorphism of a Verma module is an isomorphism.

We have $\Theta^\lambda_\mu (\D_\mu) = P_{w \cdot \lambda}$, where $w$ is the longest element in $W^\mu$. 
Write $P = P_{w \cdot \lambda}$. Let $n_x = (P:\D_{x \cdot \lambda}) = [\D_{x\cdot \lambda}: L_{w \cdot \lambda}]$. We shall show that $P$ is multiplicity free, i.e. that $n_x \leq 1$ for all $x \in W$.
Since $n_{x} \leq n_e$ it suffices to show that $n_e=1$. Since $\D_\lambda$ is projective we have
$$\dim \Hom(\D_\lambda, P) = [P: L_\lambda] = \sum_{x \in W} (P: \D_{x \cdot \lambda})\cdot [\D_{x \cdot \lambda}: L_\lambda] =(P: \D_\lambda) = n_e.$$
But on the other hand
$$\Hom(\D_\lambda, P) = \Hom(\D_\lambda, \Theta^\lambda_\mu \D_\mu) = 
\Hom( \Theta^\mu_\lambda \D_\lambda, \D_\mu) = \Hom(\D_\mu, \D_\mu)$$
and therefore $n_e= \dim \Hom(\D_\mu, \D_\mu) = 1$.

This implies that $P^{J_{\lambda}} \cong \D_\lambda$, see e.g. \cite{Bac01}. Therefore $[P^{J_{\lambda}}:\D_{w_0 \cdot \lambda}] = 1$. 

\medskip

We claim that this implies that $[P/J_{\lambda}P: \D_{w_0 \cdot \lambda}] = 1$. Since we have a surjection $P/J_\lambda P \to \D_{w \cdot \lambda}$ it is enough to show that $[P/J_{\lambda}P: \D_{w_0 \cdot \lambda}] \leq 1$.
Let $I = P^* = \Theta^\lambda_\mu (\nabla_\mu) $ (= injective hull of $L_{w \cdot \lambda}$) and
$T =   \Theta^\lambda_\mu (\D_{w_0 \cdot \mu})$ (a tilting module). Since $(P/J_{\lambda}P)^*  =I^{J_\lambda}$ it is enough to show that $[I^{J_\lambda}: \D_{w_0 \cdot \lambda}] \leq 1$.

Now $T \subseteq P$ and hence $T^{J_\lambda} \subseteq P^{J_\lambda}$ so that $[T^{J_\lambda}:\D_{w_0 \cdot \mu}] \leq 1$.
Let $K = \Ker(\nabla_{\mu} \twoheadrightarrow  \D_{w_0 \cdot \mu})$. 
Then we have $[K : \D_{w_0 \cdot \mu}] =  0$ and therefore  $[\Theta^\lambda_\mu K : \D_{w_0 \cdot \lambda}] = 0$.
The exact sequence $0 \to  \Theta^\lambda_\mu K  \to I \to T \to 0$ gives the sequence
$$
0 \to (\Theta^\lambda_\mu K)^{J_\lambda} \to I^{J_\lambda} \to T^{J_\lambda}.
$$ 
Thus,  $[I^{J_\lambda}:\D_{w_0 \cdot \mu}] \leq [T^{J_\lambda}:\D_{w_0 \cdot \mu}] \leq 1$. This proves the claim.

\medskip

Consider now a Verma flag
$$
0 \subset F_1 \subset F_2 \subset \ldots \subset F_N = P, \ F_i/F_{i-1} \cong \D_{x_i \cdot \lambda}, 
$$
so that $N = [P: \D_{w_0 \cdot \lambda}]$. Then we have $x_i \cdot \lambda \geq w \cdot \lambda$ for each $i$.  Since $[P/J_\lambda P: \D_{w_0 \cdot \lambda}] = 1$ we have
 $[J_\lambda P: \D_{w_0 \cdot \lambda}] = N-1$. Clearly $J_\lambda F_i \subseteq F_{i-1}$ and the latter equality implies that the image of $J_\lambda F_{i}$ in $F_{i-1}/F_{i-2} \cong \D_{x_i}$ 
contains the copy of $\D_{w_0 \cdot \lambda}$ for all $i \geq 2$.  Let $t_i \in F_i$ be such that its image in $F_i/F_{i-1}$ is a highest weight vector of weight $w \cdot \lambda$. Pick $s_i \in U(\n_-) t_i$ such that the image of $s_i$ in $F_i/F_{i-1}$ is a highest weight vector of weight $w_0 \cdot \lambda$. Since the image of $J_\lambda \cdot s_i \neq 0$ in $F_{i-1}/F_{i-2}$ it follows that also the image of  $J_\lambda \cdot t_i \neq 0$ in $F_{i-1}/F_{i-2}$. This shows that $[J_\lambda P: L_{w \cdot \lambda}] \geq N-1$. 
This implies that $[\Theta^\mu_\lambda J_\lambda P: L_{\mu}] \geq N-1$.

On the other hand we have a short exact sequence
$$0 \to \Theta^\mu_\lambda  J_\lambda P \to  \Theta^\mu_\lambda P \to \Theta^\mu_\lambda(P/J_\lambda P) \to 0.$$
It follows that  $\Theta^\mu_\lambda  J_\lambda P$ contains a submodule of $\Theta^\mu_\lambda  P = \D^N_{\mu}$ isomorphic to $\D^{N-1}_{\mu}$. Since moreover $\Theta^\mu_\lambda  (P/J_\lambda P) \twoheadrightarrow \D_\mu$ we conclude that  $\Theta^\mu_\lambda  J_\lambda P \cong \D^{N-1}_{\mu}$ and that $\Theta^\mu_\lambda  (P/J_\lambda P) \cong \D_\mu$.
\end{proof}
\begin{remark} Assume that $f$ is non-degenerate so that by Proposition \ref{aprop}  $Wh_f \circ \oGf \cong \V$. Hence the left adjoint of $\oGf$ is $P_{w_0\cdot \lambda} \otimes_{Z} \overline{Wh}_f$ in this case.
\end{remark}
\begin{remark} If $M \in \N^f_\hlambda$ (for arbitrary $f$) is induced from a Whittaker module over the reductive Lie algebra $\l$ (see Section \ref{xxxyyy}), 
then the adjoints $\oGf^L(M)$ and $\oGf^R(M)$ can be calculated directly only using the definition of $\oGf$ without using
$\tau^L$ and $\tau^R$.
\end{remark}
\subsection{Calculation of $\oGf^L$ and $\oGf^R$.}\label{lastsection}
We end this paper by calculating $\oGf^L$ and $\oGf^R$ on some standard Whittaker modules. Note that $\oGf^L: \N^f_{\hlambda} \to \O_\hlambda$ is right exact and
$\oGf^R: \N^f_{\hlambda} \to \O_\hlambda$ is left exact. Recall that they commute with all projective functors.

We have
$$\oGf^R(\D_{x \cdot \lambda}(f)) \cong \tau^R(\D_{x \cdot \lambda}(f))  \cong \kappa^{-1}\circ ( \Theta^\lambda_\mu( \  ))^{J_\lambda} \circ \sigma^{-1}(\D_{x \cdot \lambda}(f)) \cong \kappa^{-1}\circ \Theta^\lambda_\mu( \Delta_{x \cdot \mu})^{J_\lambda}.$$
If $x = e$ then we have $\Theta^\lambda_\mu( \Delta_{\mu})^{J_\lambda} \cong \Delta_\lambda$, as explained in the proof of Lemma \ref{adjointlem2}, and therefore $\oGf^R(\D_{\lambda}(f)) \cong \D_\lambda$. When
$x = w_0$ one can prove that  $T = \Theta^\lambda_\mu( \Delta_{w_0 \cdot \mu})$ is a multiplicity free tilting module of highest weight $w_0w \cdot \lambda$ and that in this case  $T^{J_\lambda} = \Delta_{w_0w \cdot \lambda}$, where $w$ is the longest element in $W_\mu$. Hence, $\oGf^R(\D_{w_0 \cdot \lambda}(f)) = \Delta_{w^{-1}w_0 \cdot \lambda}$. 

Similarly, we have
$$\oGf^L(\D_{x \cdot \lambda}(f)) \cong  \kappa^{-1}\circ ( \Theta^\lambda_\mu( \  ) \otimes_U U_\lambda) 
\circ \sigma^{-1}(\D_{x \cdot \lambda}(f)) \cong \kappa^{-1}\circ (\Theta^\lambda_\mu( \Delta_{x \cdot \mu})\otimes_U U_\lambda).$$
Now for $x=e$ we have  $\Theta^\lambda_\mu( \Delta_{\mu})\otimes_U U_\lambda = P_{w \cdot \lambda}/J_\lambda P_{w \cdot \lambda}$ so that 
$\oGf^L(\D_{\lambda}(f)) \cong \kappa^{-1} P_{w \cdot \lambda}/J_\lambda P_{w \cdot \lambda}$.
On the other hand, $T/J_\lambda T =  \nabla_{w_0w \cdot \lambda}$ (as follows from the above and selfduality of $T$) and therefore
$\oGf^L(\D_{w_0 \cdot \lambda}(f)) \cong  \nabla_{ww_0 \cdot \lambda}$.

 \bibliographystyle{plain}
  \bibliography{BibWhit}

\begin{thebibliography}{10}

\bibitem{Bac}
Erik Backelin.
\newblock Representation of the category {O} in {W}hittaker categories.
\newblock {\em Internat. Math. Res. Notices}, (4):153--172, 1997.

\bibitem{Bac01}
Erik Backelin.
\newblock The {H}om-spaces between projective functors.
\newblock {\em Represent. Theory}, 5:267--283, 2001.

\bibitem{BK}
Bojko Bakalov and Alexander Kirillov, Jr.
\newblock {\em Lectures on tensor categories and modular functors}, volume~21
  of {\em University Lecture Series}.
\newblock American Mathematical Society, Providence, RI, 2001.

\bibitem{BG}
J.~N. Bernstein and S.~I. Gelfand.
\newblock Tensor products of finite- and infinite-dimensional representations
  of semisimple {L}ie algebras.
\newblock {\em Compositio Math.}, 41(2):245--285, 1980.

\bibitem{Bourbaki}
N.~Bourbaki.
\newblock {\em \'{E}l\'{e}ments de math\'{e}matique. {F}asc. {XXXVIII}:
  {G}roupes et alg\`ebres de {L}ie. {C}hapitre {VII}: {S}ous-alg\`ebres de
  {C}artan, \'{e}l\'{e}ments r\'{e}guliers. {C}hapitre {VIII}: {A}lg\`ebres de
  {L}ie semi-simples d\'{e}ploy\'{e}es}.
\newblock Actualit\'{e}s Scientifiques et Industrielles, No. 1364. Hermann,
  Paris, 1975.

\bibitem{MR3994467}
Adam Brown.
\newblock Arakawa-{S}uzuki functors for {W}hittaker modules.
\newblock {\em J. Algebra}, 538:261--289, 2019.

\bibitem{EW}
Ben Elias and Geordie Williamson.
\newblock The {H}odge theory of {S}oergel bimodules.
\newblock {\em Ann. of Math. (2)}, 180(3):1089--1136, 2014.

\bibitem{H}
James~E. Humphreys.
\newblock {\em Representations of semisimple {L}ie algebras in the {BGG}
  category {O}}, volume~94 of {\em Graduate Studies in Mathematics}.
\newblock American Mathematical Society, Providence, RI, 2008.

\bibitem{J}
Jens~Carsten Jantzen.
\newblock {\em Einh\"{u}llende {A}lgebren halbeinfacher {L}ie-{A}lgebren},
  volume~3 of {\em Ergebnisse der Mathematik und ihrer Grenzgebiete (3)
  [Results in Mathematics and Related Areas (3)]}.
\newblock Springer-Verlag, Berlin, 1983.

\bibitem{K}
Bertram Kostant.
\newblock On {W}hittaker vectors and representation theory.
\newblock {\em Invent. Math.}, 48(2):101--184, 1978.

\bibitem{LW}
Nicolas Libedinsky and Geordie Williamson.
\newblock Standard objects in 2-braid groups.
\newblock {\em Proc. Lond. Math. Soc. (3)}, 109(5):1264--1280, 2014.

\bibitem{McD}
Edward McDowell.
\newblock On modules induced from {W}hittaker modules.
\newblock {\em J. Algebra}, 96(1):161--177, 1985.

\bibitem{MS}
Dragan Mili\v{c}i\'{c} and Wolfgang Soergel.
\newblock The composition series of modules induced from {W}hittaker modules.
\newblock {\em Comment. Math. Helv.}, 72(4):503--520, 1997.

\bibitem{MS1}
Dragan Mili\v{c}i\'{c} and Wolfgang Soergel.
\newblock Twisted {H}arish-{C}handra sheaves and {W}hittaker modules: the
  nondegenerate case.
\newblock 37:183--196, 2014.

\bibitem{S}
Wolfgang Soergel.
\newblock \'{E}quivalences de certaines cat\'{e}gories de
  {$\mathfrak{g}$}-modules.
\newblock {\em C. R. Acad. Sci. Paris S\'{e}r. I Math.}, 303(15):725--728,
  1986.

\bibitem{Soergel}
Wolfgang Soergel.
\newblock Kategorie {$\mathcal{O}$}, perverse {G}arben und {M}oduln \"{u}ber
  den {K}oinvarianten zur {W}eylgruppe.
\newblock {\em J. Amer. Math. Soc.}, 3(2):421--445, 1990.

\end{thebibliography}

\end{document}